\numberwithin{equation}{section}
\def\1#1{\overline{#1}}
\def\2#1{\widetilde{#1}}
\def\3#1{\widehat{#1}}
\def\4#1{\mathbb{#1}}
\def\5#1{\frak{#1}}
\def\6#1{{\mathcal{#1}}}
\newcommand{\R}{\mathbb R}
\newcommand{\C}{\mathbb C}
\renewcommand{\H}{\mathbb H}
\newcommand{\B}{\mathbb B}
\newcommand{\Aut}{{\sf Aut}}
\newcommand{\D}{\mathbb D}
\newcommand{\Ha}{\mathbb H}
\def\v{\varphi}
\def\Im{{\sf Im}\,}
\def\SD#1{c(#1)}
\newtheorem{theorem}{Theorem}[section]
\newtheorem{proposition}[theorem]{Proposition}
\newtheorem{corollary}[theorem]{Corollary}
\theoremstyle{definition}
\newtheorem{definition}[theorem]{Definition}
\theoremstyle{remark}
\newtheorem{remark}[theorem]{Remark}
\numberwithin{equation}{section}
\def\id{{\sf id}}
\newcommand{\Real}{\mathbb{R}}
\newcommand{\Natural}{\mathbb{N}}
\newcommand{\mcite}[1]{\csname b@#1\endcsname}
\long\def\REM#1{\relax}
\newcommand{\Maponto}
{\xrightarrow{\hbox{\lower.2ex\hbox{$\scriptstyle \smash{\mathsf{onto}}$}}\,}}
\newcommand{\Mapinto}
{\xrightarrow{\hbox{\lower.2ex\hbox{$\scriptstyle \smash{\mathsf{into}}$}}\,}}
\newcommand{\HIDECORRECTIONS}{%
\newcommand{\nv}[1]{##1}
\newcommand{\dv}[1]{\relax}%
}%
\title{Valiron and Abel equations for holomorphic self-maps of the polydisc}
\author[L. Arosio]{Leandro Arosio$^\ast$}
\address{Dipartimento Di Matematica\\
Universit\`{a} di Roma \textquotedblleft Tor Vergata\textquotedblright\ \\
Via Della Ricerca Scientifica 1, 00133 \\
Roma, Italy} \email{arosio@mat.uniroma2.it}
\author[P. Gumenyuk]{Pavel Gumenyuk$^\dag$}
\address{Department of Mathematics and Natural Sciences,
University of Stavanger\\
4036 Stavanger, Norway} \email{pavel.gumenyuk@uis.no}
\thanks{$^{*}$\,Supported by the ERC grant ``HEVO - Holomorphic Evolution Equations'' (No.\, 277691), and by the SIR grant ``NEWHOLITE - New methods in holomorphic iteration'' (No.\, RBSI14CFME)}
\thanks{$^\dag$\,Partially supported by the FIRB program ``Futuro in Ricerca 2008'', project \textit{Geometria Differenziale Complessa e Dinamica Olomorfa} (No.\,RBFR08B2HY)}
\begin{document}
\HIDECORRECTIONS
\begin{abstract}
We introduce a notion of hyperbolicity and parabolicity for a holomorphic self-map $f\colon \Delta^N\to \Delta^N$ of the polydisc which does not admit fixed points in $\Delta^N$.
We generalize to the polydisc two classical one-variable results: we solve the Valiron equation for a hyperbolic $f$ and the Abel equation for a parabolic nonzero-step $f$.
This is done by studying the canonical Kobayashi hyperbolic semi-model of $f$ and by obtaining a normal form for the automorphisms of the polydisc.
In the case of the Valiron equation we also describe the space of all solutions.
\end{abstract}
\maketitle
\tableofcontents


\section{Introduction}
A holomorphic self-map $f\colon \Delta\to \Delta$ of the unit disc can be classified in three types according to its  dynamical behavior.
The self-map $f$ is called {\sl elliptic} if it admits a fixed point $p\in \Delta$.
If $f$ is not an automorphism of $\Delta$, i.e. if $|f'(p)|<1$, then $(f^n)$ converges to the constant map $p$ uniformly on compact subsets.
If $f$ admits no fixed points in $\Delta$, then the classical Denjoy--Wolff theorem states that
there exists a point $p\in \partial \D$ such that  $(f^n)$ converges to the constant map $p$ uniformly on compact subsets, and such that
$$\angle \lim_{z\to p}f(z)=p \quad\mbox{and}\quad\lambda_{f}\coloneqq \liminf_{z\to p} \frac{1-|f(z)|}{1-|z|}\leq1,$$ where $\angle \lim$ stands for the non-tangential limits.
The point $p$ is called the {\sl Denjoy--Wolff point} of $f$ and the number $\lambda_{f}$ is called the {\sl dilation of $f$} at~$p$.
The self-map $f$ is called {\sl parabolic}  if $\lambda_f=1$ and
 {\sl hyperbolic}  if $\lambda_f<1$.

By $k_\Delta$ denote the Poincar\'e distance on $\Delta$.
The class of all parabolic self-maps splits into two sub-classes: parabolic self-maps $f$ {\sl of non-zero step}, for which the limit of the non-increasing sequence $\big(k_\Delta(f^n(z),f^{n+1}(z))\big)_{n\ge0}$ is positive for some (and hence for any) $z\in \Delta$,  and  those {\sl of zero step}, for which this limit is identically zero.

Each type in this classification is associated with a functional equation that gives a ``model'' for the dynamics of the self-map.
In the following theorem we summarize the results obtained by K\"onigs \cite{Ko}, Valiron \cite{Va}, Pommerenke \cite{Po}, and by Baker and Pommerenke \cite{BaPo}.
It is also worth to mention the important work of Cowen \cite{cowen}, who unified these equations in a common framework. We follow Cowen's presentation of these results.
We denote by $\H$ the upper-half plane.

\begin{theorem}\label{general}
Let $f\colon \Delta\to \Delta$ be a holomorphic self-map. \nv{If $f$ is elliptic, we suppose that $0<|f'(p)|<1$, where $p\in\Delta$ is the (unique) fixed point of~$f$.} Then there exists an $f$-invariant domain $U\subset \Delta$ such that for all $z\in \Delta$ the orbit $(f^n(z))$ eventually lies in $U$ and such that $f|_U$ is univalent. \nv{Moreover:}
\begin{enumerate}
\item[(1)] {\rm [K\"onigs]}  If $f$ is an elliptic map, then there exists a holomorphic function $\Theta\colon \Delta\to \C$ which is univalent on $U$, solves the {\sl Schr\"oder equation}
\begin{equation}\label{Seq}\Theta (f(z))=f'(p) \Theta(z)\quad \text{for all~}z\in\Delta,
\end{equation}
and satisfies $\bigcup_{n\geq 0} f'(p)^{-n}\,\Theta(\Delta)=\C$.\vskip1ex

\item[(2)] {\rm [Valiron]}  If $f$ is a hyperbolic map with dilation $\lambda_f$ at its Denjoy--Wolff point, then
 there exists a holomorphic function $\Theta\colon \Delta\to \H$ which is univalent on $U$, solves the {\sl Valiron equation}
\begin{equation}\label{Veq}\Theta (f(z))=\frac{1}{\lambda_{f}}\,\Theta(z)\quad \text{for all~}z\in\Delta,
\end{equation}
and satisfies $\bigcup_{n\geq 0} \lambda_f^{n}\,\Theta(\Delta)=\H$.\vskip1ex

\item[(3)] {\rm [Pommerenke]}  If $f$ is a parabolic map of non-zero step, then
 there exists a holomorphic function $\Theta\colon \Delta\to \H$  which is univalent on $U$, solves the {\sl Abel equation}
\begin{equation}\label{Aeq}\Theta (f(z))=\Theta(z)\pm 1\quad \text{for all~}z\in\Delta,
\end{equation}
and satisfies $\bigcup_{n\geq 0}\big(\Theta(\Delta)\mp n\big)=\H$.\vskip1ex

\item[(4)] {\rm [Baker--Pommerenke]}  If $f$ is a parabolic map of zero step, then
 there exists a holomorphic function $\Theta\colon \Delta\to \C$  which is univalent on $U$, solves the {\sl Abel equation}
\begin{equation}\label{Aeq2}\Theta (f(z))=\Theta(z)+1\quad \text{for all~}z\in\Delta,
\end{equation}
and satisfies
 $\bigcup_{n\geq 0}\big(\Theta(\Delta)-n\big)=\C$.
\end{enumerate}
\end{theorem}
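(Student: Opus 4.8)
The plan is to follow Cowen's unified treatment: first produce the fundamental set $U$, then construct in each of the four cases the intertwining function $\Theta$ as a locally uniform limit of suitably renormalized iterates of $f$, and finally extend $\Theta$ from $U$ to $\Delta$ along orbits using the functional equation itself. For the fundamental set, if $f$ is elliptic with $0<|f'(p)|<1$ one takes $U$ to be a sufficiently small hyperbolic disc $D$ centred at $p$: since $f'(p)\neq0$ the map is locally univalent at $p$, and since $|f'(p)|<1$ the Schwarz--Pick lemma gives $f(D)\Subset D$ together with $f^n\to p$ locally uniformly, so every orbit eventually enters $D=U$. If $f$ has no interior fixed point, let $p\in\partial\Delta$ be its Denjoy--Wolff point; by Julia's lemma every horocycle $E(p,R)$ at $p$ satisfies $f(E(p,R))\subseteq E(p,\lambda_f R)$, so horodiscs are forward $f$-invariant, and by the Denjoy--Wolff theorem every orbit eventually enters each of them. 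From a small horodisc one builds $U$ by a pull-back/union procedure arranged so that $f|_U$ is univalent; this is the one genuinely delicate point here, because $f$ need not be injective on a horodisc itself. Finally, for the three non-elliptic cases it is convenient to transport everything to $\H$ via a Riemann map $\psi\colon\Delta\to\H$ with $\psi(p)=\infty$, setting $g:=\psi\circ f\circ\psi^{-1}$, so that by the Julia--Wolff--Carath\'eodory theorem $g(w)/w\to1/\lambda_f$ non-tangentially at $\infty$.

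In the elliptic case (K\"onigs), a standard fixed-point estimate on $D$ gives $|f^n(z)-p|\asymp|f'(p)|^n|z-p|$ and, more precisely, that $\Theta_n:=f'(p)^{-n}(f^n-p)$ is uniformly Cauchy on $D$; its limit $\Theta$ satisfies $\Theta'(p)=1$ (hence is univalent on $D$) and solves \eqref{Seq} on $D$. One extends $\Theta$ to $\Delta$ by $\Theta(z):=f'(p)^{-n}\Theta\bigl(f^n(z)\bigr)$ for $n$ large enough that $f^n(z)\in D$, which is well defined by \eqref{Seq}; since $\Theta(\Delta)\supseteq\Theta(D)$ contains a neighbourhood of $0$ and $|f'(p)|^{-n}\to\infty$, the exhaustion $\bigcup_n f'(p)^{-n}\Theta(\Delta)=\C$ follows.

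For the three non-elliptic cases one constructs, on a horodisc at $\infty$, a map $\Theta_\H$ intertwining $g$ with the appropriate model automorphism, then sets $\Theta:=\Theta_\H\circ\psi$ near $\psi^{-1}$ of that horodisc and extends to $\Delta$ along orbits as above. If $f$ is hyperbolic, Julia's lemma gives $\Im g^n(w)\asymp\lambda_f^{-n}\Im w$, and, using this together with the monotonicity of the hyperbolic distances along orbits, the rescaled iterates $w\mapsto\lambda_f^{n}g^n(w)$ form a normal family on the horodisc whose limit $\Theta_\H$ is non-constant, maps into $\H$, and solves $\Theta_\H\circ g=\lambda_f^{-1}\Theta_\H$; as $\Theta_\H$ of the horodisc again contains a horodisc $\{\Im w>R\}$ and $\lambda_f^nR\to0$, one obtains $\bigcup_n\lambda_f^n\Theta(\Delta)=\H$. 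If $f$ is parabolic, fix a base point $w_0$ in a horodisc at $\infty$ and consider the three-point renormalization
\[
\Theta_n(w):=\frac{g^{n}(w)-g^{n}(w_0)}{g^{n}\bigl(g(w_0)\bigr)-g^{n}(w_0)},
\]
designed so that $\Theta_n(w_0)=0$ and $\Theta_n(g(w_0))=1$ identically and so that, once convergence is established, the limit $\Theta_\H$ solves $\Theta_\H\circ g=\Theta_\H\pm1$, the sign being dictated by the orientation of $\psi$. Here the step $\lim_n k_\Delta\bigl(f^n(z),f^{n+1}(z)\bigr)=\lim_n k_\H\bigl(g^n(w_0),g^{n+1}(w_0)\bigr)$ governs, inside $\H$, the size of the denominator relative to $\Im g^n(w_0)$: when it is positive (non-zero step) the family is normal with non-constant limit whose image exhausts $\H$ under integer translations, giving case (3); when it vanishes (zero step) the appropriate renormalization instead yields a limit whose image exhausts the whole plane $\C$ under integer translations, giving case (4).

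The two points that require real work are (a) producing the fundamental set $U$ with $f|_U$ \emph{univalent} in the non-elliptic cases, and (b) the convergence of the renormalized iterates — in particular, identifying the correct normalizing constants in the parabolic case and showing the resulting limit is non-constant, which is exactly where the non-zero-step versus zero-step dichotomy enters and where the image of $\Theta$ (a half-plane or the whole plane) is determined.
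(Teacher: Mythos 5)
This theorem is not proved in the paper: it is stated as a summary of the classical results of K\"onigs, Valiron, Pommerenke, and Baker--Pommerenke, in Cowen's unified framework, with citations to the original sources. So there is no proof in the paper to compare your sketch against; what you are offering is a reconstruction of the classical arguments, and on the whole your outline of the fundamental set, the K\"onigs renormalization, the Julia-lemma-based estimates, and the extension of $\Theta$ from $U$ to all of~$\Delta$ via the functional equation is the right skeleton.

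There is, however, a concrete defect in your treatment of the parabolic non-zero-step case that would need repair. The three-point normalization
\[
\Theta_n(w)=\frac{g^{n}(w)-g^{n}(w_0)}{g^{n+1}(w_0)-g^{n}(w_0)}
\]
sends $w_0$ to $0$ and divides by a genuinely complex quantity, so the maps $\Theta_n$ do \emph{not} send $\H$ into $\H$; consequently you cannot conclude that the limit takes values in $\H$, which is precisely what item (3) requires. Pommerenke's actual renormalization is of M\"obius type preserving~$\H$, roughly $w\mapsto\bigl(g^n(w)-\Re g^n(w_0)\bigr)/\Im g^n(w_0)$, and the non-zero-step hypothesis is then used to show the limit is a non-constant self-map of~$\H$ intertwining $g$ with a horizontal translation. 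Your affine three-point normalization is the one natural for the \emph{zero-step} (Baker--Pommerenke) case, where the target is all of $\C$ and the range is built up using second differences of the orbit; as written you have the two normalizations essentially swapped. A secondary, smaller point: in the hyperbolic case Julia's lemma only gives the lower bound $\Im g^n(w)\ge\lambda_f^{-n}\Im w$, not the two-sided comparison $\asymp$ you assert, so the normality of $\lambda_f^n g^n$ needs a slightly more careful argument (for instance normalizing by $|g^n(i)|$, or using that $\lambda_f^n\Im g^n(w)$ is monotone and converges).
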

\begin{remark}
It is easy to see that solutions to the Schr\"oder equation (\ref{Seq}) differ just by a constant complex factor. Analogous (but much deeper) uniqueness result for the Valiron equation (\ref{Veq}) is due to Bracci and Poggi-Corradini~\cite{BP}: every holomorphic solution $\eta\colon \D\to \H$ to the Valiron equation is a positive multiple of $\Theta$ and satisfies  ${\bigcup_{n\geq 0} \lambda_f^{n}\,\eta(\Delta)=\H}$.

Some (weaker) uniqueness results for the Abel equation are known,  see, e.g.,~\cite{zoo} for a detailed discussion. The reason for the $\pm$ sign in the Abel equation (\ref{Aeq}) is that we only consider solutions with values in the upper half-plane~$\H$. This is not the case for the Abel equation (\ref{Aeq2}), for which solutions with values in the whole~$\C$ are allowed.
\end{remark}

The Denjoy--Wolff theorem was generalized to the unit ball $\B^N$ by Herv\'e \cite{herve}, to any $C^2$-smooth bounded strongly convex  domain by Abate \cite{abate}, and to arbitrary (not necessarily smooth)  bounded  strongly convex domains by Budzy\'nska \cite{Bu}, see also \cite{AR, R1}. Using the dilation at the Denjoy--Wolff point, one can extend the dynamical classification to holomorphic self-maps on the ball. This made possible to generalize Theorem~\ref{general}  in several ways,
see, e.g., \cite{ABmod} for a brief history.

For the polydisc $\Delta^N$, holomorphic dynamics has been studied by a number of specialists, e.g., by Herv\'e~\cite{herveP}, Frosini~\cite{frosini}, Abate and Raissy~\cite{AR}.
The main issue in this case is that the Denjoy--Wolff theorem fails, as the following example from~\cite{A} shows. Let $f\colon \Delta^2\to\Delta^2$ be defined by  $f(z,w)\coloneqq (\lambda z, \frac{1+w}{3-w})$, where $ |\lambda|=1,\lambda\neq 1 $. Then $f$ has no fixed points in~$\Delta^2$, but $(f^n)$ does not converge.

Hence, in the polydisc case, a different approach is needed in order to obtain a dynamical classification.
In~\cite{ABmod} it is proved that given a holomorphic self-map $f\colon \B^N\to \B^N$ with no fixed points in $\B^N$,  one can determine whether $f$ is parabolic or hyperbolic by looking at how fast the orbits diverge to infinity w.r.t. the Kobayashi distance in the ball. The divergence rate is defined in~\cite{ABmod} not only in~$\B^N$ but on any complex manifold\nv{. This allows us to use the divergence rate} in order to define dynamical types of holomorphic self-maps of  the polydisc, see Definition~\ref{definition}.

The main results of this note are the generalizations to the polydisc of assertions~(2) and~(3) of  Theorem~\ref{general}.
\begin{theorem}\label{TH_main-Valiron}
Let $f\colon \Delta^N\to \Delta^N$ be a hyperbolic holomorphic self-map with dilation $\lambda_{f}$.
Then there exists a holomorphic function $\Theta\colon \Delta^N\to \H$ that solves the Valiron equation
$$\Theta( f(z))=\frac{1}{\lambda_{f}}\,\Theta(z)\quad \text{for all~}z\in\Delta,$$
and satisfies $\bigcup_{n\geq 0} \lambda_f^{n}\,\Theta(\Delta)=\H$.
\end{theorem}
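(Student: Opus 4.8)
The plan is to reduce the statement to the case of an automorphism by passing to the \emph{canonical Kobayashi hyperbolic semi-model} of $f$, and then to exploit a normal form for automorphisms of the polydisc in order to extract from it a single scalar intertwining function with values in~$\H$.

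First I would invoke (or, in the body of the paper, construct) the canonical Kobayashi hyperbolic semi-model of $f$: a triple $(Z,h,\varphi)$ where $Z$ is a complete Kobayashi hyperbolic complex manifold, $h\colon\Delta^N\to Z$ is holomorphic, $\varphi\in\Aut(Z)$, and
\[
h\circ f=\varphi\circ h,\qquad \bigcup_{n\ge0}\varphi^{-n}\big(h(\Delta^N)\big)=Z.
\]
Its existence, and the fact that it records the divergence rate of $f$ (hence the dynamical type), belongs to the general theory of semi-models; what is specific to the polydisc is the \emph{shape} of $Z$. The step that I expect to be the technical heart of the argument is to show that for a self-map of $\Delta^N$ the base $Z$ is biholomorphic to a polydisc $\Delta^m$ with $0\le m\le N$, so that $\varphi$ becomes a fixed-point-free automorphism of $\Delta^m$, and moreover that since $f$ is hyperbolic, $m\ge 1$ and $\varphi$ is itself hyperbolic (has positive divergence rate).

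Next, apply the normal form for automorphisms of the polydisc: up to conjugation by a biholomorphism of $\Delta^m$ one may assume that $\varphi=(\varphi_1,\dots,\varphi_m)$ acts coordinatewise, with each $\varphi_j\in\Aut(\Delta)$, followed possibly by a permutation of the coordinates. Tracking how the divergence rate of $f$ is transported to $\varphi$, I would show that after reindexing there is a coordinate $j$ for which $\varphi_j$ is a hyperbolic automorphism of $\Delta$ whose dilation at its Denjoy--Wolff point equals $\lambda_{f}$. Let $C\colon\Delta\to\H$ be a Cayley transform chosen so that $C$ conjugates $\varphi_j$ to the affine map $w\mapsto w/\lambda_{f}$ of~$\H$ (sending the Denjoy--Wolff point of $\varphi_j$ to $0$ and its repelling fixed point to $\infty$), and let $\pi_j\colon\Delta^m\to\Delta$ be the $j$-th coordinate projection.

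Finally set $\Theta\coloneqq C\circ\pi_j\circ h\colon\Delta^N\to\H$. Since $\pi_j\circ\varphi=\varphi_j\circ\pi_j$,
\[
\Theta\circ f=C\circ\pi_j\circ\varphi\circ h=C\circ\varphi_j\circ\pi_j\circ h=\frac{1}{\lambda_{f}}\,C\circ\pi_j\circ h=\frac{1}{\lambda_{f}}\,\Theta,
\]
which is the Valiron equation. For the surjectivity statement, project $\bigcup_{n}\varphi^{-n}\big(h(\Delta^N)\big)=Z=\Delta^m$ through $\pi_j$ to get $\bigcup_{n}\varphi_j^{-n}\big(\pi_j(h(\Delta^N))\big)=\Delta$; applying $C$ and using that $C$ turns $\varphi_j^{-1}$ into $w\mapsto\lambda_{f}w$ gives $\bigcup_{n\ge0}\lambda_{f}^{\,n}\,\Theta(\Delta^N)=C(\Delta)=\H$. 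Everything after the structural input is formal; the genuinely hard points are exactly the two facts used above, namely that the canonical Kobayashi hyperbolic semi-model of a self-map of $\Delta^N$ has a polydisc as base space, and that the dilation $\lambda_{f}$ is realized as the dilation of one of the one-dimensional factors of the model automorphism $\varphi$.
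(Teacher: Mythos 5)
Your reduction to an automorphism of a polydisc via the canonical Kobayashi hyperbolic semi-model (Theorem~\ref{principaleforward}) together with the Heath--Suffridge classification of retracts is exactly the paper's first step, and the formal verification of the functional equation and surjectivity at the end is fine. The gap is in the assertion that after reindexing one has $\pi_j\circ\varphi=\varphi_j\circ\pi_j$ with $\varphi_j$ a hyperbolic disc automorphism. By Poincar\'e's theorem the model automorphism acts as $\varphi(z)=\big(\varphi_1(z_{p(1)}),\ldots,\varphi_m(z_{p(m)})\big)$ for some permutation $p$, so in fact $\pi_j\circ\varphi=\varphi_j\circ\pi_{p(j)}$, and this equals $\varphi_j\circ\pi_j$ only if $p(j)=j$. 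The permutation cannot be removed by conjugation inside $\Aut(\Delta^m)$: conjugation only replaces $p$ by a conjugate permutation, so its cycle type --- in particular whether it has any fixed index at all --- is an invariant of $\varphi$. For a hyperbolic automorphism such as $\varphi(z_1,z_2)=(4z_2,\,4z_1)$ of $\H^2$, the permutation is a $2$-cycle, the normal form $\frac1\lambda\sigma_2$ of Theorem~\ref{TH_canonical-from-cycle-auto} still carries the cyclic shift, and no coordinate projection $\pi_j$ intertwines $\varphi$ with an affine self-map of $\H$ (indeed $\pi_1\circ\varphi=4\pi_2$ depends on the ``wrong'' variable).

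This is precisely the phenomenon the paper's cycle automorphisms are introduced to handle. For the normal form $L=\frac1\lambda\sigma_k$ the Valiron functions are exactly the holomorphic $V\colon\H^k\to\H$ that are positively homogeneous of degree one and invariant under the shift $\sigma_k$ (Proposition~\ref{PR_Valiron-functions-for-cycle-autos}); the simplest such function is $V(z)=\tfrac1k\sum_{j=1}^k z_j$, which is $\sigma_k$-invariant and satisfies $V\circ L=\tfrac1\lambda V$. Replacing your $C\circ\pi_j$ by such a $\sigma_k$-invariant homogeneous function, precomposed with the conjugating map of Theorem~\ref{TH_canonical-from-cycle-auto} and with the projection onto the appropriate block of the cycle decomposition, closes the gap and essentially recovers the paper's argument (compare the analogous construction in the proof of Theorem~\ref{par}).
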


\begin{theorem}\label{TH_main-Abel}
Let $f\colon \Delta^N\to \Delta^N$ be a parabolic holomorphic self-map, with nonzero-step.
Then there exists a holomorphic function $\Theta\colon \Delta^N\to \H$ that solves the Abel equation
$$\Theta( f(z))=\Theta(z)\pm 1 \quad \text{for all~}z\in\Delta,$$
and satisfies $\bigcup_{n\geq 0}\big(\Theta(\Delta)\mp n\big)=\H$.
\end{theorem}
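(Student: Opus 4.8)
The plan is to reduce Theorem~\ref{TH_main-Abel} to the study of a single automorphism of a lower-dimensional polydisc and then to exhibit $\Theta$ explicitly. First I would attach to $f$ its canonical Kobayashi hyperbolic semi-model $(Z,h,\varphi)$: a Kobayashi hyperbolic complex manifold $Z$, an automorphism $\varphi\in\Aut(Z)$, and a holomorphic map $h\colon\Delta^N\to Z$ with $h\circ f=\varphi\circ h$ and $\bigcup_{n\ge0}\varphi^{-n}\big(h(\Delta^N)\big)=Z$. The first structural point, which is part of the work, is that for a self-map of the polydisc this $Z$ is biholomorphic to a polydisc $\Delta^q$ with $1\le q\le N$, so that $\varphi$ becomes an automorphism of $\Delta^q$. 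The second point is that the dynamical behaviour of $f$ is visible in $(Z,h,\varphi)$ through the decomposition of $\varphi$ discussed below: $f$ being parabolic translates into the absence of a hyperbolic component in $\varphi$, and $f$ having nonzero step (Definition~\ref{definition}) translates into the presence of a parabolic component --- equivalently, into the cyclic group generated by $\varphi$ not being relatively compact in $\Aut(\Delta^q)$. (The hyperbolic case, Theorem~\ref{TH_main-Valiron}, fits the same framework, with ``parabolic'' replaced by ``hyperbolic''.)

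The heart of the argument is a normal form for $\varphi$. Using the identification of $\Aut(\Delta^q)$ with the wreath product of $\Aut(\Delta)$ by the symmetric group $S_q$, decompose the coordinate permutation induced by $\varphi$ into disjoint cycles; correspondingly $\Delta^q$ splits $\varphi$-equivariantly as a product of sub-polydiscs, one per cycle, and after a conjugation inside $\Aut(\Delta^q)$ each cyclic block of length $m$ becomes $(z_1,\dots,z_m)\mapsto\big(\gamma(z_m),z_1,\dots,z_{m-1}\big)$ for a single Möbius transformation $\gamma$, whose $m$-th iterate acts as $\gamma$ on every coordinate; the dynamical type of the block is that of $\gamma$. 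By the remarks above no block is hyperbolic, and at least one block, say of length $m$ with twist $\gamma$, is parabolic. Pick a Cayley transform $C\colon\Delta\to\H$ with $C\gamma C^{-1}(w)=w\pm1$ and set, on that block, $\vartheta(z_1,\dots,z_m)\coloneqq\frac1m\sum_{j=1}^{m}\big(C(z_j)\pm\tfrac{j-1}{m}\big)$; then $\vartheta$ maps into $\H$, is onto $\H$, and satisfies $\vartheta\circ\varphi=\vartheta\pm\tfrac1m$ on the block. Multiplying by $m$ and letting $\vartheta$ ignore the other coordinates produces a holomorphic $\vartheta\colon\Delta^q\to\H$ with $\vartheta(\Delta^q)=\H$ and $\vartheta\circ\varphi=\vartheta\pm1$. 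Finally put $\Theta\coloneqq\vartheta\circ h\colon\Delta^N\to\H$. Then $\Theta\circ f=\vartheta\circ\varphi\circ h=(\vartheta\pm1)\circ h=\Theta\pm1$, and, using $\vartheta\circ\varphi^{-n}=\vartheta\mp n$, $\bigcup_{n\ge0}\big(\Theta(\Delta^N)\mp n\big)=\vartheta\big(\bigcup_{n\ge0}\varphi^{-n}(h(\Delta^N))\big)=\vartheta(\Delta^q)=\H$, which is the required covering property. Note that, unlike in the one-variable statement, no univalence of $\Theta$ is claimed, and none is needed.

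I expect the main obstacle to be the two structural inputs, each a one-time technical investment: proving that the canonical Kobayashi hyperbolic semi-model of a polydisc self-map is again a polydisc (rather than a holomorphic retract or a more general Kobayashi hyperbolic manifold), and proving that a parabolic $f$ with nonzero step has a semi-model whose automorphism, once in normal form, contains a parabolic block and no hyperbolic block. The latter is where the polydisc notion of nonzero step has to be set up so as to exclude degenerate behaviour --- such as a product of a zero-step parabolic disc map with an irrational rotation --- while reducing to the classical notion when $N=1$. Once these are in place, the normal form for $\Aut(\Delta^q)$ together with the averaging map above closes the proof; the only genuinely new bookkeeping relative to the disc case is the treatment of cyclic blocks of length $>1$, where the relevant one-variable dynamics is that of the $m$-fold composition $\gamma$.
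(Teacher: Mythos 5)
Your proposal follows essentially the same route as the paper's proof of Theorem~\ref{par}: pass to the canonical Kobayashi hyperbolic semi-model, use Heath--Suffridge to identify its base with $\H^q$, invoke the Poincar\'e cycle decomposition together with the step/divergence-rate dichotomy to locate a parabolic cycle and exclude hyperbolic ones, and average over that cycle. The only cosmetic difference is that the paper first conjugates the parabolic block to $\sigma_k\pm(1,\dots,1)$ via Theorem~\ref{TH_canonical-from-cycle-auto} and uses the plain mean, while you keep the single twist $\gamma$ on one coordinate and compensate with the shifted average $\tfrac1m\sum_j\big(C(z_j)\pm\tfrac{j-1}{m}\big)$.
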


The proof of both theorems consists of two main steps. First we apply the theory of canonical semi-models developed by Bracci and the first-named author in  \cite{ABmod,Amod} and a result of Heath and Suffridge~\cite{HS} to reduce the problem to the case of an automorphism $\tau$ of the polydisc.
Then we solve the problem for $\tau$ by conjugating it to a suitable normal form.

Theorem~\ref{TH_main-Abel} is a part of Theorem~\ref{par}, which we prove in Section~\ref{S_Abel}.
Theorem~\ref{TH_main-Valiron} follows, in view of Remark~\ref{RM_inf_Valiron-functions}, from Theorem~\ref{TH_Valiron-equation}, which we prove in Section~\ref{S_Valiron}.
Moreover, Theorem~\ref{TH_Valiron-equation} gives a complete description of all solutions to the Valiron equation with values in~$\H$, generalizing the uniqueness result by Bracci and Poggi-Corradini~\cite{BP} for the unit disc.

\section{Preliminaries}

For a complex  manifold $X$, we denote by $k_X$ its Kobayashi pseudo-distance.

\begin{definition}
Let $f\colon X\to X$ be a holomorphic self-map of a complex manifold~$X$.
 The {\sl step} $s(x)$ of $f$ at $x\in X$ is the limit
$$s(x)\coloneqq\lim_{n\to\infty}k_X(f^n(x), f^{n+1}(x)).$$ This limit exists because the sequence $(k_X(f^n(x), f^{n+1}(x))_{n\geq 0}$ is non-increasing.

The {\sl divergence rate} $c(f)$ of $f$ is the limit $$c(f)\coloneqq\lim_{m\to\infty}\frac{k_X(f^m(x),x)}{m},$$ which, according to~\cite{ABmod}, exists and does not depend on the choice of~$x\in X$.
\end{definition}

\begin{remark}
In the context of non-expansive mapping theory in Banach spaces and in
the Hilbert ball, $s(x)$ and $c(f)$ were considered in~\cite{BBR,R2,R3}.
\end{remark}

The following result is proved in \cite{ABmod}.
\begin{theorem}
Let $f\colon \B^q\to \B^q$ be a holomorphic self-map with no fixed points in $\B^q$, and let $\lambda_{f}\in (0,1]$ be the dilation of $f$ at its Denjoy--Wolff point. Then  $\lambda_{f}=e^{-c(f)}.$
\end{theorem}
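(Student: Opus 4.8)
The plan is to establish the equivalent asymptotic identity $c(f)=-\log\lambda_f$, which I would reduce to a two-sided estimate for the rate at which the orbit of $0$ approaches the boundary. By the Denjoy--Wolff theorem on $\B^q$ \cite{herve}, the orbit $(f^n(0))_{n\ge0}$ converges to the Denjoy--Wolff point $p\in\partial\B^q$; in particular $\|f^n(0)\|\to1$, and $\lambda_f=\liminf_{z\to p}(1-\|f(z)\|)/(1-\|z\|)\in(0,1]$. Since $\B^q$ is circular, $k_{\B^q}(0,w)$ depends only on $\|w\|$, and $k_{\B^q}(0,w)+\log(1-\|w\|)$ stays bounded as $\|w\|\to1$ (in the normalization of the Kobayashi distance used here); as $c(f)=\lim_n k_{\B^q}(0,f^n(0))/n$ exists and does not depend on the base point by the defining property of the divergence rate, the limit $L:=\lim_n\tfrac1n\log(1-\|f^n(0)\|)$ exists and $c(f)=-L$. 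Thus it suffices to prove $L=\log\lambda_f$, which I would do through two matching inequalities.

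For $L\le\log\lambda_f$ I would invoke Julia's lemma on the ball: $f$ carries each horosphere $E(p,R)=\{z\in\B^q:|1-\langle z,p\rangle|^2<R(1-\|z\|^2)\}$ into $E(p,\lambda_f R)$. Since $f(0)\in E(p,R_0)$ with $R_0:=|1-\langle f(0),p\rangle|^2/(1-\|f(0)\|^2)<\infty$, iteration gives $f^n(0)\in E(p,\lambda_f^{\,n-1}R_0)$ for all $n\ge1$; inserting $|1-\langle f^n(0),p\rangle|\ge1-\|f^n(0)\|$ into the defining inequality of this horosphere and cancelling one factor $1-\|f^n(0)\|$ yields $1-\|f^n(0)\|\le(1+\|f^n(0)\|)\lambda_f^{\,n-1}R_0\le2R_0\lambda_f^{\,n-1}$, whence $\tfrac1n\log(1-\|f^n(0)\|)\le\tfrac1n\log(2R_0)+\tfrac{n-1}{n}\log\lambda_f\to\log\lambda_f$, so $L\le\log\lambda_f$.

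For the reverse inequality $L\ge\log\lambda_f$ I would argue from the definition of $\lambda_f$ as an (unrestricted) liminf at $p$: for every $\varepsilon\in(0,\lambda_f)$ there is a neighbourhood $\U$ of $p$ with $1-\|f(z)\|>(\lambda_f-\varepsilon)(1-\|z\|)$ for all $z\in\U\cap\B^q$; since $f^n(0)\to p$ there is $N$ with $f^n(0)\in\U$ for all $n\ge N$, so telescoping gives $1-\|f^n(0)\|>(\lambda_f-\varepsilon)^{\,n-N}(1-\|f^N(0)\|)$ for $n\ge N$, hence $L\ge\log(\lambda_f-\varepsilon)$, and letting $\varepsilon\to0$ gives $L\ge\log\lambda_f$. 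Combining the two bounds gives $L=\log\lambda_f$, i.e.\ $\lambda_f=e^{-c(f)}$; since nothing in the argument distinguishes $\lambda_f<1$ from $\lambda_f=1$, it also covers the parabolic case (with $c(f)=0$). The only non-elementary inputs are the Denjoy--Wolff theorem and Julia's lemma on $\B^q$; I expect the one point needing care to be the second inequality, where it matters that the liminf defining $\lambda_f$ is unrestricted (which Julia's lemma guarantees) rather than merely non-tangential. Alternatively, one could deduce $\lim_n(1-\|f^{n+1}(0)\|)/(1-\|f^n(0)\|)=\lambda_f$ from the non-tangential convergence of orbits and the Julia--Wolff--Carath\'eodory theorem, but the liminf argument above avoids that machinery.
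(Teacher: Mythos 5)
The paper does not prove this theorem; it cites it from~\cite{ABmod} without giving an argument, so there is no internal proof to compare against. Your proof---reducing $c(f)$ to the decay rate $L=\lim_n\tfrac1n\log(1-\|f^n(0)\|)$, then bounding $L$ from above via the horosphere-contraction form of Julia's lemma on $\B^q$ and from below by telescoping the unrestricted $\liminf$ that defines $\lambda_f$ along the orbit---is correct and self-contained; the Julia estimate, the telescoping step, and the observation that the argument handles $\lambda_f=1$ uniformly are all fine. The one point you must pin down is the normalization constant, which you flag but do not resolve. With the standard Kobayashi distance $k_{\B^q}(0,w)=\tanh^{-1}\|w\|=\tfrac12\log\tfrac{1+\|w\|}{1-\|w\|}$, it is $k_{\B^q}(0,w)+\tfrac12\log(1-\|w\|)$ rather than $k_{\B^q}(0,w)+\log(1-\|w\|)$ that stays bounded as $\|w\|\to1$, giving $c(f)=-\tfrac12 L$ and hence $\lambda_f=e^{-2c(f)}$, which is in fact the form in which~\cite{ABmod} records the result. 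The version $\lambda_f=e^{-c(f)}$ stated here, and your boundedness claim, hold under the doubled normalization $k_\D(0,r)=\log\tfrac{1+r}{1-r}$, and that appears to be what the present paper uses implicitly (it later asserts that the divergence rate of $z\mapsto z/\mu$ on $\H$ equals $-\log\mu$). So the argument is sound, but the hedge ``in the normalization used here'' hides a genuine factor of two that should be made explicit and kept consistent between the reduction step and the final identity.
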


\begin{definition}
Let $X$ be a complex manifold and let $f\colon X\to X$ be a holomorphic self-map.
A {\sl semi-model} for $f$ is a triple  $(\Lambda,h,\v)$,  where
$\Lambda$ is a complex manifold, $h\colon X\to \Lambda$ is a holomorphic mapping, and $\v\colon \Omega\to\Omega$ is an automorphism such that
$
h\circ f=\v\circ h$ and $\cup_{n\geq0}\v^{-n}(h(X))=\Lambda$.
We call the manifold $\Lambda$ the {\sl base space} and the mapping $h$ the {\sl intertwining mapping}.

Let $(Z,\ell,\tau)$ and   $(\Lambda, h, \v)$ be two semi-models for $f$. A {\sl morphism of semi-models}
$\hat\eta\colon (Z,\ell,\tau)\to (\Lambda, h, \v)$ is given by a  holomorphic map $\eta: Z\to \Lambda$
such that  the following diagram commutes:
\SelectTips{xy}{12}
\[ \xymatrix{X \ar[rrr]^h\ar[rrd]^\ell\ar[dd]^f &&& \Lambda \ar[dd]^\varphi\\
&& Z \ar[ru]^\eta \ar[dd]^(.25)\tau\\
X\ar'[rr]^h[rrr] \ar[rrd]^\ell &&& \Lambda\\
&& Z \ar[ru]^\eta.}
\]
\end{definition}
\begin{remark}\label{help}
It is shown in \cite[Lemmas 3.6 and 3.7]{ABmod} that if $ (Z,\ell,\tau), (\Lambda, h, \v)$ are semi-models for $f$, then there exists at most one morphism $\hat\eta\colon (Z,\ell,\tau)\to (\Lambda, h, \v)$, and that the holomorphic map $\eta: Z\to \Lambda$ is surjective.
\end{remark}

\begin{definition}
Let $X$ be a complex manifold and let $f\colon X\to X$ be a holomorphic self-map. Let $(Z, \ell,\tau)$ be a semi-model for $f$  whose base space $Z$ is Kobayashi hyperbolic. We say that  $(Z, \ell,\tau)$ is a {\sl canonical Kobayashi hyperbolic semi-model} for $f$  if for any semi-model
$(\Lambda, h,\varphi )$ for $f$ such that the base space $\Lambda$ is Kobayashi hyperbolic,  there exists a morphism of semi-models $\hat\eta\colon (Z, \ell,\tau)\to (\Lambda, h,\varphi )$.
 \end{definition}
 \begin{remark}\label{tango}
If $(Z, \ell,\tau)$ and $(\Lambda, h,\varphi )$ are two canonical Kobayashi hyperbolic semi-model for $f$, then they are isomorphic.
\end{remark}
In what follows we will need the following result from \cite{Amod} (see also \cite{ABmod}).
\begin{theorem}\label{principaleforward}
 Let $X$ be a cocompact  Kobayashi hyperbolic complex manifold, and let $f\colon X\to X$ be a holomorphic self-map. Then there exists a  canonical Kobayashi hyperbolic semi-model $(Z,\ell,\tau)$ for $f$, where $Z$ is a holomorphic retract of $X$.
Moreover, the following holds:
\begin{enumerate}
\item   for any $n\geq 0$,
$\lim_{m\to\infty}  (f^m)^*\, k_X=(\tau^{-n}\circ \ell)^* \, k_Z$;
\item the divergence rate of $\tau$ satisfies $c(\tau)=c(f).$
\end{enumerate}
\end{theorem}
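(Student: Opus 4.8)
The plan is to realize the base space as a ``renormalized limit'' of the iterates of $f$, exploiting that a cocompact Kobayashi hyperbolic manifold is taut.

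Set $\rho:=\lim_{m\to\infty}(f^m)^*k_X$. Since $f$ is distance-decreasing, $(f^{m+1})^*k_X=(f^m)^*(f^*k_X)\le(f^m)^*k_X$, so the limit exists as a continuous pseudo-distance on $X$, and $f^*\rho=\lim_m(f^{m+1})^*k_X=\rho$, i.e. $\rho$ is $f$-invariant. The goal is a complex manifold $Z$, a surjective holomorphic $\ell\colon X\to Z$ and an automorphism $\tau$ of $Z$ with $\tau\circ\ell=\ell\circ f$, $\bigcup_{n\ge0}\tau^{-n}(\ell(X))=Z$ and $\ell^*k_Z=\rho$; the $f$-invariance of $\rho$ is exactly what makes $\tau$ an isometry (hence, after the direct-limit step below, an automorphism). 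Granting this, assertion~(1) is immediate: each $\tau^{-n}$ is a $k_Z$-isometry, so $(\tau^{-n}\circ\ell)^*k_Z=\ell^*k_Z=\rho=\lim_m(f^m)^*k_X$ for all $n$, and $Z$ is a holomorphic retract of $X$ by construction.

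\emph{Construction of $Z$.} Fix $x_0\in X$. Using cocompactness, choose $\gamma_m\in\Aut(X)$ with $\gamma_m(f^m(x_0))$ lying in a fixed compact set; since $X$ is taut, a subsequence of the renormalized iterates $\gamma_m\circ f^m$ converges, uniformly on compacta, to some $h\in\Hol(X,X)$. Automorphisms are $k_X$-isometries, so $(\gamma_m\circ f^m)^*k_X=(f^m)^*k_X$, and passing to the limit (continuity of $k_X$) gives $h^*k_X=\rho$. As $k_X$ is a genuine distance ($X$ being hyperbolic), $h(x)=h(y)\iff\rho(x,y)=0$: the fibres of $h$ are exactly the null-classes of $\rho$. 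The delicate point — this is the main work, carried out in \cite{Amod} (see also \cite{ABmod}) — is to promote this to a complex-geometric statement: applying a further extraction and a diagonal argument to the ``increments'' $f^{m_{k+1}-m_k}$, which are themselves iterates of $f$, one produces a holomorphic retraction of $X$ onto a submanifold $Z_0$ with $h(X)=Z_0$ and with the $\rho$-null-classes as fibres; hence $Z_0$ is a holomorphic retract of $X$, $\ell_0:=h\colon X\to Z_0$ is a holomorphic submersion with $\ell_0^*k_{Z_0}=\rho$, and $f$ descends to an injective holomorphic self-map $\tau_0$ of $Z_0$ that is a $k_{Z_0}$-isometry onto its image. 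Adjoining the backward $\tau_0$-iterates of $\ell_0(X)$ — a direct-limit construction $Z:=\varinjlim(Z_0\xrightarrow{\tau_0}Z_0\xrightarrow{\tau_0}\cdots)$ — yields a complex manifold $Z$, again (isomorphic to) a holomorphic retract of $X$, an automorphism $\tau$ extending $\tau_0$, and the level-zero map $\ell\colon X\to Z$, with $\tau\circ\ell=\ell\circ f$, $\bigcup_{n\ge0}\tau^{-n}(\ell(X))=Z$ and $\ell^*k_Z=\rho$. I expect this step — extracting the idempotent limit, so as to obtain the retract structure and to identify the Kobayashi distance of the quotient with $\rho$ — to be the main obstacle; it is precisely where cocompactness and tautness are used.

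\emph{Canonicity.} Let $(\Lambda,h',\varphi')$ be a Kobayashi hyperbolic semi-model for $f$. From $h'\circ f=\varphi'\circ h'$ and the fact that $\varphi'$ is an automorphism (so $(\varphi')^*k_\Lambda=k_\Lambda$) we obtain $f^*\big((h')^*k_\Lambda\big)=(h')^*k_\Lambda$, hence $(f^m)^*\big((h')^*k_\Lambda\big)=(h')^*k_\Lambda$ for every $m$; combined with $(h')^*k_\Lambda\le k_X$, letting $m\to\infty$ gives $(h')^*k_\Lambda\le\rho$. Therefore $\ell(x)=\ell(y)\Rightarrow\rho(x,y)=0\Rightarrow k_\Lambda(h'(x),h'(y))=0\Rightarrow h'(x)=h'(y)$, using that $k_\Lambda$ is a distance. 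So $h'$ factors through $\ell$; since $\ell$ is a holomorphic submersion, the factor is holomorphic on $\ell(X)$, and it extends to $Z=\bigcup_{n\ge0}\tau^{-n}(\ell(X))$ by $\eta\big(\tau^{-n}\ell(w)\big):=(\varphi')^{-n}\big(h'(w)\big)$ — this is well defined because $\tau^{-n}\ell(w)=\tau^{-n'}\ell(w')$ with $n'\ge n$ forces $\ell(w')=\ell\big(f^{\,n'-n}(w)\big)$, whence $h'(w')=(\varphi')^{\,n'-n}\big(h'(w)\big)$ — and it satisfies $\eta\circ\tau=\varphi'\circ\eta$. Thus $\hat\eta\colon(Z,\ell,\tau)\to(\Lambda,h',\varphi')$ is a morphism of semi-models, so $(Z,\ell,\tau)$ is a canonical Kobayashi hyperbolic semi-model.

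\emph{Divergence rate.} Put $a_m:=k_X(f^m(x_0),x_0)$ and $b_m:=k_Z(\tau^m\ell(x_0),\ell(x_0))=\ell^*k_Z(f^m(x_0),x_0)=\rho(f^m(x_0),x_0)$. Both sequences are subadditive ($a_m$ because $f$ is distance-decreasing, $b_m$ because $\tau$ is a $k_Z$-isometry), so $c(f)=\inf_m a_m/m$ and $c(\tau)=\inf_m b_m/m$. Since $\rho\le k_X$, we have $b_m\le a_m$, hence $c(\tau)\le c(f)$. Conversely, $b_m=\lim_k k_X(f^{m+k}(x_0),f^k(x_0))\ge\limsup_k(a_{m+k}-a_k)$, and the telescoping identity $\sum_{j<J}(a_{(j+1)m}-a_{jm})=a_{Jm}$ shows that the Ces\`aro means of $(a_{(j+1)m}-a_{jm})_{j\ge0}$ converge to $m\,c(f)$, so $\limsup_j(a_{(j+1)m}-a_{jm})\ge m\,c(f)$ and therefore $b_m\ge m\,c(f)$; hence $c(\tau)\ge c(f)$. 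Thus $c(\tau)=c(f)$.
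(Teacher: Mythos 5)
The paper does not prove Theorem~\ref{principaleforward} — it imports it from \cite{Amod} (see also \cite{ABmod}) — so there is no internal proof to compare against. Your sketch reconstructs the argument of those references in the right spirit, and like the paper it defers the technical core to them: producing, from a taut renormalized subsequence of $(f^m)$, a holomorphic retraction $\ell_0$ of $X$ onto a retract $Z_0$ whose fibres are the $\rho$-null classes, and verifying that the direct limit $Z$ is again (biholomorphic to) a holomorphic retract of $X$. The pieces you do prove in full are correct: once $\ell^*k_Z=\rho$ is in hand, assertion~(1) is immediate because $\tau$ is a $k_Z$-isometry; the canonicity argument — the estimate $(h')^*k_\Lambda\le\rho$, the set-theoretic factorization of $h'$ through $\ell$, well-definedness of the extension $\eta$ over $\bigcup_{n\ge0}\tau^{-n}(\ell(X))$, and the intertwining relation $\eta\circ\tau=\varphi'\circ\eta$ — is complete; and the subadditivity/Ces\`aro argument establishing $c(\tau)=c(f)$ is a clean, self-contained proof of~(2). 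One minor clarification: holomorphicity of the factor map $\eta_0$ on $\ell(X)$ does not actually require $\ell_0$ to be a submersion (a holomorphic retraction need not have maximal rank away from its image); what it requires — and what you implicitly have — is the holomorphic section $Z_0\hookrightarrow X$ of $\ell_0$, since $\ell_0|_{Z_0}=\id_{Z_0}$ forces $\eta_0=h'|_{Z_0}$, which is manifestly holomorphic. Similarly, $\ell^*k_Z=\rho$ deserves one more line: because $\tau_0$ is a $k_{Z_0}$-isometry, the Kobayashi distances of the exhausting levels $W_n=\tau^{-n}(\ell(X))\cong Z_0$ restrict consistently to $W_0$, so $k_Z|_{W_0\times W_0}$ agrees with $k_{Z_0}$ and $\ell^*k_Z=\ell_0^*k_{Z_0}=\rho$.
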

\begin{remark}
Notice that $Z$ could reduce to a point. In such a case, $c(f)$ is necessarily zero.
\end{remark}

\section{A normal form for the  automorphisms of the polydisc}
We start by introducing some notation.
For $k\in\Natural$ and $j=1,\ldots,k$ we introduce notation for the following maps
\begin{align*}
\sigma_k&:\C^k\to\C^k; &&(z_1,z_2,\ldots,z_k)\mapsto(z_2,\ldots,z_{k-1},z_1),\\
\pi_{j,k}&:\C^k\to\C; &&z\mapsto \langle z,e_j\rangle,
\end{align*}
where $\langle\cdot,\cdot\rangle$ is the Hermitian inner product and $(e_j)$ is the standard basis in~$\C^k$.

The following classical theorem due to Poincar\'e describes all holomorphic automorphisms of~$\Delta^q$ in terms of disc automorphisms. We formulate this theorem for the ``poly-halfplane'' $\H^q$, which is, on the one hand, biholomorphic to the polydisc~$\Delta^q$, but, on the other hand, seems to be more convenient in the study of fixed-point free holomorphic self-maps.
\begin{theorem}[Poincar\'e]\label{TH_Poincare}
Every holomorphic automorphism $\tau$ of~$\H^q$ is a map of the form
$$
\H^q\ni(z_1,z_2,\ldots,z_q)\stackrel{\tau}{\mapsto}\big(\gamma_1(z_{p(1)}),\gamma_2(z_{p(2)}),\ldots,\gamma_q(z_{p(q)})\big),
$$
where $p$ is a permutation of~$\{1,\ldots,q\}$ and $\gamma_j\in\Aut(\H)$ for all~$j=1,\ldots, q$.
\end{theorem}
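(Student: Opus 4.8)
The plan is to reduce the structure of an automorphism $\tau$ of $\H^q$ to that of its coordinate components, using Liouville-type rigidity and a connectedness/fibration argument. First I would pass freely between $\H^q$ and $\Delta^q$ via the Cayley transform in each coordinate, so that I may use whichever model is convenient; I will work with $\Delta^q$ for the boundedness estimates and transfer back at the end. Write $\tau = (\tau_1,\ldots,\tau_q)$ with each $\tau_j\colon\Delta^q\to\Delta$ holomorphic. The key local fact is the description of the infinitesimal automorphisms: a holomorphic vector field $V=\sum_j v_j(z)\,\partial_{z_j}$ on $\Delta^q$ that is complete (generates a one-parameter subgroup of $\Aut(\Delta^q)$) must have each $v_j$ depending only on $z_j$ and be at most quadratic in $z_j$, i.e. $v_j(z)=v_j(z_j)$ with $v_j$ a polynomial of degree $\le 2$ — this is because $\Aut(\Delta)=\Moeb$ is three-dimensional and the Lie algebra of $\Aut(\Delta^q)$ is forced to split as a product. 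From this one already sees that the identity component $\Aut_0(\Delta^q)$ consists of maps of the product form $(z_1,\ldots,z_q)\mapsto(\gamma_1(z_1),\ldots,\gamma_q(z_q))$ with $\gamma_j\in\Aut(\Delta)$.

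Next I would handle the discrete part, i.e. show that every $\tau\in\Aut(\Delta^q)$ is a product automorphism precomposed with a coordinate permutation. Fix a point, say the origin $0\in\Delta^q$, and let $a:=\tau^{-1}(0)$. Composing $\tau$ with the product automorphism sending $0$ to $a$ (a Möbius map in each slot), I may assume $\tau(0)=0$. Then I want to show such a $\tau$ is linear, i.e. $\tau\in U(q)$, and moreover is a signed permutation matrix compatible with the polydisc. For linearity: since $\tau$ fixes $0$ and is an automorphism of the bounded circled domain $\Delta^q$, the Cartan uniqueness / linearization theorem for bounded circled domains applies — the map $z\mapsto \lim_{k} \tfrac1k\sum_{j=0}^{k-1} e^{-i j\theta}\tau(e^{ij\theta}z)$ style averaging, or more directly Cartan's theorem, forces $\tau$ to equal its derivative $d\tau_0=:A\in GL(q,\C)$. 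Then $A$ must map $\Delta^q$ onto $\Delta^q$ linearly; analyzing which linear maps preserve the polydisc (a standard computation: each row of $A$ must be a single unimodular entry, by looking at extreme points of the closed polydisc, or by comparing the Kobayashi/Carathéodory indicatrices which are the polydisc itself) shows $A = D\cdot P$ where $P$ is a permutation matrix and $D$ is diagonal unitary. Writing $P$ as the permutation $p$ and absorbing $D$ into the $\gamma_j$'s yields the claimed form.

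The step I expect to be the main obstacle — or at least the one requiring the most care — is the linear-algebra characterization of which $A\in GL(q,\C)$ satisfy $A(\Delta^q)=\Delta^q$: one must argue that the preservation of the distinguished boundary (the torus $\T^q$) or of the set of extreme points forces the monomial-matrix structure, rather than merely a general unitary. A clean way is to note that $\sup_{z\in\Delta^q}|(Az)_i| = \sum_j |a_{ij}| $ must equal $1$ for each $i$ (so $A$ has rows of $\ell^1$-norm $1$), and simultaneously the inverse $A^{-1}$ has the same property; combining the two constraints $\sum_j|a_{ij}|=1$ and $\sum_i|(A^{-1})_{ji}|=1$ with $A^{-1}A=I$ forces each row and column of $A$ to have exactly one nonzero entry, of modulus $1$. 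Once this is in place, the remainder is bookkeeping: reassemble $\tau = (\text{permutation }p)$ followed by disc automorphisms $\gamma_j\in\Aut(\Delta)$ in each coordinate, and transport the statement back to $\H^q$ through the Cayley transforms, where $\Aut(\Delta)$ becomes $\Aut(\H)$, giving exactly the asserted normal form $\tau(z_1,\ldots,z_q)=(\gamma_1(z_{p(1)}),\ldots,\gamma_q(z_{p(q)}))$.
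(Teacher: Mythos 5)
The paper states this as a classical result of Poincar\'e and gives no proof, so there is no internal argument to compare against. Your proposal is a correct and fairly standard proof: transfer to $\Delta^q$, normalize so that $\tau(0)=0$ via a product of disc M\"obius maps, invoke Cartan's linearization theorem for bounded circled domains to conclude $\tau=A$ is linear, and then read off from $\sup_{z\in\Delta^q}|(Az)_i|=\sum_j|a_{ij}|=1$ together with the same constraint for $A^{-1}$ and invertibility that $A$ is a monomial matrix (each row and column has a single entry, of modulus~$1$), which is a permutation composed with a diagonal torus factor; the diagonal factor is absorbed into the $\gamma_j$'s and the Cayley transform takes you back to~$\H^q$.

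One remark: the opening paragraph on complete holomorphic vector fields is both circular (it asserts that the Lie algebra of $\Aut(\Delta^q)$ splits as a product, which is essentially the statement to be proved) and unnecessary for the rest of the argument. To move $\tau^{-1}(0)=a$ to the origin you do not need to know anything about the identity component: for each $j$ simply take the M\"obius map $\phi_{a_j}\in\Aut(\Delta)$ with $\phi_{a_j}(a_j)=0$, and the product map $(\phi_{a_1},\dots,\phi_{a_q})$ is manifestly an automorphism of $\Delta^q$. With that paragraph deleted, the proof is clean: Cartan's theorem plus the monomial-matrix computation carry all the weight, and both steps are correct as you have stated them.
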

Unfortunately, the study of dynamics of polydisc automorphisms does not seem to benefit much from the direct application of the representation given in Theorem~\ref{TH_Poincare}.   In this section we show that Theorem~\ref{TH_Poincare} leads to another representation, a normal form for polydisc automorphisms, which turns out to be much more informative from the dynamical point of view. We start by introducing what we call \textsl{cycle automorphisms}.
\begin{definition}\label{DF_cycle-automorphism}
A \textsl{cycle automorphism} of~$\H^k$ is an automorphism~$\tau$ of the form
\begin{equation}\label{EQ_cycle-auto}
\H^k\ni z=(z_1,z_2,\ldots,z_k)\stackrel{\tau}{\mapsto}(\gamma_1(z_2),\gamma_2(z_3),\ldots,\gamma_k(z_1)),
\end{equation}
where $\gamma_j\in\Aut(\H)$ for all~$j=1,\ldots, k$.
\end{definition}
In this definition we allow $k=1$, in which case the cycle automorphisms are just automorphisms of~$\H$. The reason to consider cycle automorphisms is explained in the following remark.

\begin{remark}\label{RM_Poincare-thrm}
Using the decomposition of the permutation~$\sigma$ into cycles, one can rephrase Poincare's Theorem~\ref{TH_Poincare} as follows: {\it every holomorphic automorphism $\tau\colon\H^q\to\H^q$ can be represented, up to a permutation of the coordinates, as a direct sum of cycle automorphisms}. More precisely, for any $\tau\in\Aut(\H^q)$ there exists a partition ${\coprod_{\nu=1}^nJ_\nu=\{1,\ldots,q\}}$, bijective maps ${p_\nu\colon \{1,\ldots,k_\nu\}\to J_\nu}$ and cycle automorphisms ${\tau_\nu\colon \H^{k_\nu}\to\H^{k_\nu}}$, ${\nu=1,\ldots,n}$, such that
\begin{equation}\label{EQ_cycle-decomp}
\bar\pi_\nu\circ\tau=\tau_\nu\circ\bar\pi_\nu\quad\text{for all~$\nu=1,\ldots,n$},
\end{equation}
where $\bar\pi_\nu\colon \H^q\ni(z_1,\ldots,z_q)\mapsto(z_{p_\nu(1)},\ldots,z_{p_\nu(k_\nu)})\in\H^{k_\nu}$.
\end{remark}
In what follows, representation~\eqref{EQ_cycle-decomp} in the above remark will be referred to as a \textsl{cycle decomposition} of~$\tau$. Obviously, it is unique up to a change of order of $J_\nu$'s and \nv{pre-compositions} of $p_\nu$'s with cyclic shifts of the sets $\{1,\ldots,k_\nu\}$.

\begin{remark}\label{RM_kth-iterate-of-cycle-auto}
Note that for a cycle automorphism~$\tau$ defined by~\eqref{EQ_cycle-auto},
$$
\tau^{ k}(z_1,z_2,\ldots,z_k)=(\Gamma_1(z_1),\Gamma_2(z_2),\ldots,\Gamma_k(z_k)),
$$
where $\Gamma_j$, $j=1,\ldots,k$, are automorphisms of~$\H$ determined recurrently by $${\Gamma_1:=\gamma_1\circ\gamma_2\circ\ldots\circ\gamma_k}\quad\text{and}\quad {\Gamma_{j+1}=\gamma_j^{-1}\circ\Gamma_j\circ\gamma_j}~~\text{~for $j=1,\ldots,k-1$.}$$  In particular,  all $\Gamma_j$ are conjugate to each other.
\end{remark}

As a consequence, it is particularly easy to extend the dynamical classification of the disc automorphisms to the cycle automorphisms.

\begin{definition}\label{triaut}
Let $\tau\colon\H^k\to\H^k$  be a cycle automorphism. We say that $\tau$ is {\sl elliptic}, {\sl parabolic}, or {\sl hyperbolic}, depending on whether the automorphism $\Gamma_1$ (and hence any of~$\Gamma_j$'s) is an elliptic, parabolic or hyperbolic self-map of~$\H$, respectively.

In the elliptic case, we define the {\sl multipliers}~$\lambda_\tau$ of $\tau$  to be the $k$-th roots of $\Gamma_1'(p)$, where $p\in\H$ is the (unique) fixed point of~$\Gamma_1$. In the parabolic and hyperbolic cases, we define the {\sl dilation} of
$\tau$ to be $\lambda_\tau\coloneqq\sqrt[k] \lambda_{\Gamma_1}>0$, where $\lambda_{\Gamma_1}$ stands for the dilation of~$\Gamma_1$ at its Denjoy\,--\,Wolff point.
\end{definition}

The following result gives a normal form for cycle automorphisms (and, thanks to Remark \ref{RM_Poincare-thrm}, for general automorphisms). For completeness we will treat also the elliptic case, although  we will not need it in what follows.

\begin{theorem}\label{TH_canonical-from-cycle-auto}
Let $\tau:\H^k\to\H^k$ be a cycle automorphism. Then following statements hold.
\begin{itemize}
\item[(i)] If $\tau$ is hyperbolic or parabolic, then there exists $g\in\Aut(\H^k)$ of the form
$g(z_1,z_2,\ldots,z_k)=\big(g_1(z_1),g_2(z_2),\ldots,g_k(z_k)\big)$, where $g_j\in \Aut(\H)$, $\nv{j=1,\ldots,k}$, such that the following diagram commutes:
$$
\begin{CD}
\H^k @>\tau>> \H^k\\
@V{g}VV @VV{g}V\\
\H^k @>{L}>> \H^k
\end{CD}
$$
where $L:=\dfrac1{\lambda_{\mathrlap{\tau}}}\,\sigma_k$ in the hyperbolic case and $L:=\sigma_k\pm(1,1,\ldots,1)$ in the parabolic case.

\item[(ii)] If $\tau$ is elliptic, then for each of the $k$ values of the multiplier $\lambda_\tau$ there exists a biholomorphism ${g:\H^k\to\Delta^k}$ of the form\\
${g(z_1,z_2,\ldots,z_k)=\big(g_1(z_1),g_2(z_2),\ldots,g_k(z_k)\big)}$, where $g_j\colon \H\to \Delta$, $\nv{j=1,\ldots,k}$, are  biholomorphisms, such that the following diagram commutes:
$$
\begin{CD}
\H^k @>\tau>> \H^k\\
@V{g}VV @VV{g}V\\
\Delta^k @>{L}>> \Delta^k
\end{CD}
$$
where $L:=\lambda_\tau\sigma_k$.
\end{itemize}
\end{theorem}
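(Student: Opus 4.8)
The plan is to reduce everything to the well-understood case $k=1$ by exploiting the recursion in Remark~\ref{RM_kth-iterate-of-cycle-auto}, which expresses $\tau^k$ as a direct sum of the conjugate automorphisms $\Gamma_1,\dots,\Gamma_k$ of $\H$. Concretely, I would first treat the hyperbolic and parabolic cases together in item~(i). Write $\tau(z_1,\dots,z_k)=(\gamma_1(z_2),\dots,\gamma_k(z_1))$ and recall $\Gamma_1=\gamma_1\circ\cdots\circ\gamma_k$. Since $\Gamma_1$ is a hyperbolic (resp.\ parabolic) automorphism of $\H$, the classical one-variable normalization gives $h_1\in\Aut(\H)$ conjugating $\Gamma_1$ to the standard model: $h_1\circ\Gamma_1\circ h_1^{-1}$ equals $z\mapsto z/\lambda_{\Gamma_1}$ in the hyperbolic case (after sending the Denjoy--Wolff point to $\infty$ and the other fixed point to $0$) and $z\mapsto z\pm k$ in the parabolic case (after sending the Denjoy--Wolff point to $\infty$; here the translation length is chosen to be $\pm k$ because we will redistribute it among the $k$ coordinates). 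The sign $\pm$ matches the sign of the translation of $\Gamma_1$, equivalently the direction in which orbits of $\tau$ escape.

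Next I would define the remaining conjugating maps by propagating $h_1$ around the cycle: set $g_1:=h_1$ and, for $j=1,\dots,k-1$, let $g_{j+1}:=g_j\circ\gamma_j^{-1}$, i.e.\ $g_{j+1}=h_1\circ\gamma_1^{-1}\circ\cdots\circ\gamma_j^{-1}$. Then $g:=(g_1,\dots,g_k)$ is the desired element of $\Aut(\H^k)$, diagonal in the sense required. A direct computation shows that $g\circ\tau\circ g^{-1}$ sends $(w_1,\dots,w_k)$ to $(h_1\gamma_1 g_2^{-1}(w_2),\, g_2\gamma_2 g_3^{-1}(w_3),\dots,\, g_k\gamma_k g_1^{-1}(w_1))$. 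By the definition of the $g_j$'s, each of the first $k-1$ slots $g_j\circ\gamma_j\circ g_{j+1}^{-1}$ collapses to the identity of $\H$, while the last slot $g_k\circ\gamma_k\circ g_1^{-1}$ equals $h_1\circ(\gamma_1\circ\cdots\circ\gamma_k)\circ h_1^{-1}=h_1\circ\Gamma_1\circ h_1^{-1}$, which is exactly $z\mapsto z/\lambda_\tau^{\,k}=z/(\lambda_{\Gamma_1})$ (hyperbolic) or $z\mapsto z\pm k$ (parabolic). Thus $g\circ\tau\circ g^{-1}$ is $\sigma_k$ composed with a map acting only on the $z_1$-variable; one then checks that $\frac1{\lambda_\tau}\sigma_k$ (hyperbolic) and $\sigma_k\pm(1,\dots,1)$ (parabolic) are conjugate, via a further diagonal automorphism of $\H^k$, to this map—for the hyperbolic case by the diagonal dilation $(z_j)\mapsto(\lambda_\tau^{\,k-j}z_j)$, and for the parabolic case by the diagonal translation $(z_j)\mapsto(z_j\pm(k-j))$. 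Absorbing this last diagonal automorphism into $g$ completes~(i).

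For item~(ii), the scheme is identical, with $\H$ replaced by $\Delta$ as the target and $\Gamma_1$ now elliptic. Fixing one of the $k$ choices of a $k$-th root $\lambda_\tau$ of $\Gamma_1'(p)$, I first pick a biholomorphism $h_1\colon\H\to\Delta$ sending the fixed point $p$ of $\Gamma_1$ to $0$, so that $h_1\circ\Gamma_1\circ h_1^{-1}$ is the rotation $\zeta\mapsto\Gamma_1'(p)\zeta=\lambda_\tau^{\,k}\zeta$ of $\Delta$. Propagating via $g_{j+1}:=g_j\circ\gamma_j^{-1}$ exactly as before yields a diagonal biholomorphism $g\colon\H^k\to\Delta^k$ conjugating $\tau$ to $\sigma_k$ followed by multiplication by $\lambda_\tau^{\,k}$ in the first coordinate, and then the diagonal rotation $(\zeta_j)\mapsto(\lambda_\tau^{\,k-j}\zeta_j)$ (an automorphism of $\Delta^k$) turns this into $\lambda_\tau\sigma_k$; absorbing it into $g$ finishes the proof.

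The only genuinely non-routine point is the bookkeeping of where the ``extra factor'' lives after conjugation and verifying that the clean target maps $L$ in the statement—$\frac1{\lambda_\tau}\sigma_k$, $\sigma_k\pm(1,\dots,1)$, $\lambda_\tau\sigma_k$—are precisely what one obtains; that is, checking that $L^k$ reproduces the diagonal $(\Gamma_1,\dots,\Gamma_k)$-model and that the intermediate diagonal automorphism (dilation, translation, or rotation) indeed evens out the single ``loaded'' coordinate into the symmetric form. Everything else is a formal consequence of the one-variable normal forms for $\Aut(\H)$ together with the cyclic recursion already recorded in Remark~\ref{RM_kth-iterate-of-cycle-auto}. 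I would also remark that the sign in the parabolic case is an invariant of $\tau$ (it is the sign of the translation number of $\Gamma_1$ at its Denjoy--Wolff point), which is why the statement keeps the $\pm$ rather than normalizing it away.
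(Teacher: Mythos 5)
Your overall strategy is the same as the paper's: normalize $\Gamma_1=\gamma_1\circ\cdots\circ\gamma_k$ by a one-variable map $h_1$ (an automorphism of $\H$, or a biholomorphism $\H\to\Delta$ in the elliptic case), and then propagate around the cycle to obtain the remaining diagonal components $g_j$. The only difference from the paper is cosmetic: the paper bakes the scaling (by $\lambda_\tau$, or the shift by $\mp1$) into the recursion defining the $g_j$'s, so that $g$ directly intertwines $\tau$ with $L$, whereas you first produce a ``lopsided'' conjugate with the whole of $h_1\circ\Gamma_1\circ h_1^{-1}$ concentrated in one slot and then distribute it via a further diagonal dilation/translation/rotation. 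Unwinding, your evening-out map is exactly the accumulated scaling factor that the paper builds into $g_j$, so the two proofs are essentially the same.

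There is, however, a concrete sign error in your propagation. You define $g_{j+1}:=g_j\circ\gamma_j^{-1}$ and claim that each slot $g_j\circ\gamma_j\circ g_{j+1}^{-1}$ of $g\circ\tau\circ g^{-1}$ collapses to the identity for $j<k$. With your definition, $g_{j+1}^{-1}=\gamma_j\circ g_j^{-1}$, so $g_j\circ\gamma_j\circ g_{j+1}^{-1}=g_j\circ\gamma_j^{2}\circ g_j^{-1}$, which is not the identity unless $\gamma_j^2=\mathrm{id}$; likewise, under your recursion the last slot $g_k\circ\gamma_k\circ g_1^{-1}$ equals $h_1\circ\gamma_1^{-1}\circ\cdots\circ\gamma_{k-1}^{-1}\circ\gamma_k\circ h_1^{-1}$, which is not $h_1\circ\Gamma_1\circ h_1^{-1}$ in general. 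The correct recursion is $g_{j+1}:=g_j\circ\gamma_j$, i.e.\ $g_{j+1}=h_1\circ\gamma_1\circ\cdots\circ\gamma_j$. Then $g_{j+1}^{-1}=\gamma_j^{-1}\circ g_j^{-1}$, the intermediate slots collapse, the last slot is $g_1\circ\Gamma_1\circ g_1^{-1}=h_1\circ\Gamma_1\circ h_1^{-1}$ as required, and the rest of your argument, including the evening-out step, goes through correctly.
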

\begin{proof}
Let us first assume first that $\tau$ is hyperbolic. Adopting notation of Remark~\ref{RM_kth-iterate-of-cycle-auto}, we see that there exists  $g_1\in \Aut(\H)$ such that $g_1\circ\Gamma_1=g_1/\lambda_{\Gamma_1}$.  Then the function $V_1:=g_1\circ\pi_{1,k}$ satisfies
\begin{equation}\label{EQ_Valiron-V_1}
V_1\circ\tau^{k}=\frac{1}{\lambda_{\mathrlap{\Gamma_1}}}\,V_1.
\end{equation}
For $j=2,\ldots,{k+1}$, define the functions $V_j$ recurrently by $V_j:=\lambda_{\tau}(V_{j-1}\circ\tau)$.
Thanks to~\eqref{EQ_Valiron-V_1} we see that  $V_{k+1}=V_1$. It follows that
\begin{equation}\label{EQ_V-vector-funct-eq}
(V_1,V_2,\ldots,V_k)\circ\tau=\frac{1}{\lambda_{\mathrlap{\tau}}}\,(V_2,\ldots,V_k,V_1)
\end{equation}
Using the very definition of cycle automorphism (Definition~\ref{DF_cycle-automorphism}), it is easy to see that $V_j=g_j\circ\pi_{j,k}$ for all $j=1,\ldots, k$, where the $g_j$'s are defined recurrently by $\nv{g_j:=\lambda_\tau (g_{j-1}\circ\gamma_{j-1})}$, $j=2,\ldots,k$. This proves the theorem for a hyperbolic cycle automorphisms~$\tau$.

The case of a parabolic automorphism~$\tau$ is treated  essentially in the same way, except that  $g_1\in \Aut(\H)$ is defined as a solution to the functional equation ${g_1\circ\Gamma_1=g_1\pm k}$, the equation \eqref{EQ_Valiron-V_1} is replaced by  $${V_1\circ\tau^{ k}=V_1\pm k},$$ and the functions~$V_j$, $\nv{g_j}$ for ${j=2,\ldots, k+1}$ are defined by $V_j:=(V_{j-1}\circ\tau)\mp1$ and $\nv{g_j:=(g_{j-1}\circ\gamma_{j-1})\mp1}$. In all the formulas, the choice of the sign ``$+$'' or ``$-$'' is determined uniquely by the canonical form of~$\Gamma_1$.

Finally, the proof for the elliptic case can be also carried out using essentially the same argument \nv{as above} if we define the biholomorphism $g_1\colon \H\to \Delta$ to be a solution to $g_1\circ\Gamma_1=\lambda_{\Gamma_1}g_1$, replace equation~\eqref{EQ_Valiron-V_1} with the Schr\"oder equation $$V_1\circ\tau^{k}=\lambda_{\Gamma_1}V_1,$$ and define the functions $V_j$, $\nv{g_j}$ for ${j=2,\ldots, k+1}$ recurrently by ${V_j:=\lambda^{-1}_{\tau}(V_{j-1}\circ\tau)}$ and $\nv{g_j:=\lambda_\tau^{-1} (g_{j-1}\circ\gamma_{j-1})}$.
\end{proof}

\begin{remark}
Using the fact that $L^{ k}$ is of the form $(z_1,\ldots,z_k)\mapsto(T(z_1),\ldots, T(z_k))$, where $T:\C\to\C$ is an affine map, it is not difficult to see that for the hyperbolic case, the intertwining map $g$ in the above theorem is unique up to multiplication by a positive number.
Similarly, for the parabolic case, $g$ is unique  up to post-composing with a translation of the form $z\mapsto z+(a,\ldots,a)$, where $a\in\Real$. The canonical form $L$ is unique in both cases, in particular, the choice of the sign for $L=\sigma_k\pm(1,\ldots,1)$ in the parabolic case is determined uniquely by the automorphism~$\tau$.
At the same time, for the elliptic case the canonical form~$L$ is determined by~$\tau$ \textit{and} by the choice of the multiplier~$\lambda_\tau$. Independently of the choice of the multiplier, the intertwining map $g$ is unique up to multiplication by a number of absolute value one, unless $\tau^{ k}=\id_{\H^k}$.  The canonical forms corresponding to different choices of the multiplier are conjugate by the linear map
$
(z_1,z_2,\ldots,z_k)\mapsto (z_1,\mu z_2,\ldots,\mu^{k-1}z_k),
$
where~${\mu^k=1}$. Finally, in the ``degenerate'' case $\tau^{k}=\id_{\H^k}$, the intertwining map~$g$ is defined up to an arbitrary holomorphic automorphism of~$\H$.
\end{remark}
\nv{
\begin{remark}
From Theorem~\ref{TH_canonical-from-cycle-auto} it follows that a cycle automorphism~$\tau\colon \H^k\to \H^k$ is elliptic if and only if it has a fixed point in~$\H^k$.
\end{remark}}

\section{Dynamical classification for holomorphic self-maps of the polydisc}

We now introduce a classification for holomorphic self-maps of the polydisc using the divergence rate.
\begin{definition}\label{definition}
Let $f\colon \H^q\to \H^q$ be a holomorphic self-map. Then we say that
\begin{itemize}
\item[i)] $f$ is {\sl elliptic} if it admits a fixed point $z\in \H^q$,
\item[ii)] $f$ is {\sl parabolic} if it admits no fixed point in $\H^q$ and  $c(f)=0$,
\item[iii)] $f$ is {\sl hyperbolic} if $c(f)>0$ (which implies that $f$ admits no fixed point in $\H^q$).
\end{itemize}
If $f$ is not elliptic, then we define its {\sl dilation} to be $\lambda_{f}\coloneqq e^{-c(f)}\in (0,1]$.
 We say that
 $f$ is of {\sl non-zero step}  if $\lim_{n\to\infty}k_{\H^q}(f^n(z),f^{n+1}(z))\neq 0$ for all~$z\in \H^q$.
\end{definition}

It is easy to see that, if $\tau$ is a cycle automorphism, then this definition is coherent with Definition~\ref{triaut}.
We now study the case of an arbitrary automorphism $\tau$ of $\H^q$.

\begin{proposition}\label{PR_multiplier-and-div-rate}
Let $\tau\in\Aut(\H^q)$. Then
$
\SD\tau=\max_{\nu} c(\tau_\nu),
$
where the maximum is taken over all cycle automorphisms~$\tau_\nu$ in the cycle  decomposition of $\tau$.
\end{proposition}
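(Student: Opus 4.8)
The plan is to reduce the assertion to the behaviour of the Kobayashi distance on a product, followed by an elementary computation with limits. First I would record that the divergence rate is a biholomorphic invariant: if $\Phi\colon X\to Y$ is a biholomorphism, then $k_Y(\Phi(x),\Phi(x'))=k_X(x,x')$, so for $g:=\Phi\circ f\circ\Phi^{-1}$ we get $k_Y(g^m(\Phi(x)),\Phi(x))=k_X(f^m(x),x)$ for every $m$, and hence $c(g)=c(f)$. Now take a cycle decomposition \eqref{EQ_cycle-decomp} of $\tau$, with partition $\coprod_{\nu=1}^n J_\nu=\{1,\dots,q\}$, bijections $p_\nu$, and cycle automorphisms $\tau_\nu\colon\H^{k_\nu}\to\H^{k_\nu}$. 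The maps $\bar\pi_\nu$ assemble into a single coordinate permutation $\bar\pi:=(\bar\pi_1,\dots,\bar\pi_n)\colon\H^q\to\H^{k_1}\times\dots\times\H^{k_n}$, which is a biholomorphism, and by \eqref{EQ_cycle-decomp} it conjugates $\tau$ to the direct sum $\tau_1\oplus\dots\oplus\tau_n$. By the invariance just noted it therefore suffices to prove the statement for $\tau=\tau_1\oplus\dots\oplus\tau_n$ acting on $\H^{k_1}\times\dots\times\H^{k_n}$.

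Next I would invoke the standard product formula for the Kobayashi distance, namely
\[
k_{X_1\times\dots\times X_n}\!\big((x^{(1)},\dots,x^{(n)}),(y^{(1)},\dots,y^{(n)})\big)=\max_{1\le\nu\le n}k_{X_\nu}\!\big(x^{(\nu)},y^{(\nu)}\big),
\]
combined with the obvious identity $\tau^m(z)=\big(\tau_1^m(z^{(1)}),\dots,\tau_n^m(z^{(n)})\big)$ for $z=(z^{(1)},\dots,z^{(n)})\in\H^q$. This yields, for every $m$,
\[
\tfrac1m\,k_{\H^q}(\tau^m(z),z)=\max_{1\le\nu\le n}\ \tfrac1m\,k_{\H^{k_\nu}}\!\big(\tau_\nu^m(z^{(\nu)}),z^{(\nu)}\big).
\]
Since $\H^q$ and each $\H^{k_\nu}$ are cocompact Kobayashi hyperbolic, the divergence rates $c(\tau)$ and $c(\tau_\nu)$ are well defined and base-point independent by \cite{ABmod}; hence the left-hand side above tends to $c(\tau)$, while the $\nu$-th term on the right tends to $c(\tau_\nu)$, as $m\to\infty$.

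It remains to pass to the limit through the maximum. Here I would use the elementary fact that for a \emph{finite} family of real sequences $\big(a_\nu(m)\big)_m$ with $a_\nu(m)\to c_\nu$ one has $\max_\nu a_\nu(m)\to\max_\nu c_\nu$: for each fixed $\nu_0$, $\liminf_m\max_\nu a_\nu(m)\ge\lim_m a_{\nu_0}(m)=c_{\nu_0}$, whence $\liminf_m\max_\nu a_\nu(m)\ge\max_\nu c_\nu$; conversely, given $\varepsilon>0$, for all large $m$ we have $a_\nu(m)\le c_\nu+\varepsilon\le\max_\mu c_\mu+\varepsilon$ simultaneously for all (finitely many) $\nu$, so $\limsup_m\max_\nu a_\nu(m)\le\max_\mu c_\mu$. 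Applying this with $a_\nu(m):=\tfrac1m\,k_{\H^{k_\nu}}(\tau_\nu^m(z^{(\nu)}),z^{(\nu)})$ gives $c(\tau)=\max_\nu c(\tau_\nu)$.

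There is no serious obstacle in this argument: the only ingredients beyond bookkeeping are the biholomorphic invariance of the divergence rate, the product formula for the Kobayashi distance, and the commutation of a limit with a finite maximum, all of which are routine. The single point that deserves explicit mention is that every limit appearing in the displays above genuinely exists, which is exactly the content of the result of \cite{ABmod} recalled in the preliminary section, valid because $\H^q$ and all the $\H^{k_\nu}$ are cocompact Kobayashi hyperbolic.
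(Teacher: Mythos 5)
Your proof is correct, and it is a genuinely streamlined variant of the paper's argument. The paper passes to the iterate $\tau^Q$, $Q=\prod_\nu k_\nu$, which by Remark~\ref{RM_kth-iterate-of-cycle-auto} is a \emph{diagonal} map $(T_1,\dots,T_q)$ with each $T_j\in\Aut(\H)$; it then applies the product formula for the Kobayashi distance at the level of single disc factors to get $c(\tau^Q)=\max_\nu Qc(\tau_\nu)$, and finally invokes the relation $c(\tau^Q)=Qc(\tau)$ from \cite[Remark~2.5]{ABmod} to descend to~$\tau$ itself. You instead stop at the coarser decomposition $\tau\cong\tau_1\oplus\cdots\oplus\tau_n$, apply the same product formula directly to the blocks $\H^{k_\nu}$, and finish by passing a finite maximum through the limit defining $c$. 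This avoids any appeal to the $c(f^n)=nc(f)$ relation and is slightly more self-contained; the two ingredients you do use — biholomorphic invariance of $c$ and existence of the divergence-rate limit (from \cite{ABmod}) — are also used, implicitly or explicitly, in the paper's proof. One minor remark: the paper's definition of the divergence rate asserts existence of the limit for an arbitrary complex manifold, so your invocation of cocompactness, while harmless and true for $\H^q$ and each $\H^{k_\nu}$, is stronger than what is needed.
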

\begin{proof}
We adopt here the notation of Remark~\ref{RM_Poincare-thrm}.
Using that remark together with Remark~\ref{RM_kth-iterate-of-cycle-auto}, we see that if $Q:=\prod_{\nu=1}^n k_\nu$, then
$$
\tau^{ Q}(z_1,z_2,\ldots,z_q)=\big(T_1(z_1),T_2(z_2),\ldots,T_q(z_q)\big)\quad \text{for all~$(z_1,\ldots,z_q)\in\H^q$,}
$$
where $T_j\in\Aut(\H)$, $j=1,\ldots,q$. Moreover, $c(T_j)=Qc(\tau_\nu)$ for all~$j\in J_\nu$ and all ${\nu=1,\ldots,n}$.

Recall that $ k_{\H^q}(z,w)=\max_{j=1,\ldots,q} k_{\H}(z_j,w_j)$ for any pair of points ${z=(z_1,\ldots,z_q)}$ and  ${w=(w_1,\ldots,w_q)}$ in~${\H^q}$. Then  $\SD{\tau^{ Q}}=\max_\nu  Qc(\tau_\nu)$, which implies the result, see~\cite[Remark~2.5]{ABmod}.
\end{proof}

This immediately yields the following
\begin{corollary}\label{peraut}
Let $\tau\in\Aut(\H^q)$. Then:
\begin{enumerate}
\item $\tau$ is elliptic if and only if its cycles are all elliptic,
\item $ \tau $ is parabolic if and only if it admits a parabolic cycle and does not admit hyperbolic ones,
\item $\tau$ is hyperbolic if and only if it admits a hyperbolic cycle, and in this case its dilation $\lambda_\tau$ is the minimum of the dilations of its hyperbolic cycles.
\end{enumerate}
\end{corollary}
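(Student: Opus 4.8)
The plan is to reduce everything to Proposition~\ref{PR_multiplier-and-div-rate}, combined with the fixed-point characterization of ellipticity for cycle automorphisms recorded after Theorem~\ref{TH_canonical-from-cycle-auto}, and with the coherence, for cycle automorphisms, between Definitions~\ref{triaut} and~\ref{definition}.

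First I would settle~(1). If $\tau$ is elliptic, pick a fixed point $z_0\in\H^q$. Applying $\bar\pi_\nu$ to $\tau(z_0)=z_0$ and using $\bar\pi_\nu\circ\tau=\tau_\nu\circ\bar\pi_\nu$ from~\eqref{EQ_cycle-decomp}, we get $\tau_\nu(\bar\pi_\nu(z_0))=\bar\pi_\nu(z_0)$, so each $\tau_\nu$ has a fixed point in $\H^{k_\nu}$ and is therefore elliptic. Conversely, if every $\tau_\nu$ is elliptic, choose a fixed point $w_\nu\in\H^{k_\nu}$ of each $\tau_\nu$; since $\coprod_\nu J_\nu=\{1,\ldots,q\}$ and $\tau$ is, up to a permutation of coordinates, the direct sum of the $\tau_\nu$'s, the $w_\nu$'s assemble, via the bijections $p_\nu$, into a point $z_0\in\H^q$ with $\bar\pi_\nu(z_0)=w_\nu$ for all $\nu$; then $\bar\pi_\nu(\tau(z_0))=\tau_\nu(w_\nu)=w_\nu=\bar\pi_\nu(z_0)$ for every $\nu$, and since the $\bar\pi_\nu$'s jointly separate points, $\tau(z_0)=z_0$. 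Hence $\tau$ is elliptic.

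For~(2) and~(3) I would invoke Proposition~\ref{PR_multiplier-and-div-rate}: $c(\tau)=\max_\nu c(\tau_\nu)$. By the coherence of the two definitions for cycle automorphisms, $c(\tau_\nu)>0$ exactly when $\tau_\nu$ is hyperbolic (and then $e^{-c(\tau_\nu)}=\lambda_{\tau_\nu}<1$), while $c(\tau_\nu)=0$ when $\tau_\nu$ is elliptic or parabolic. Thus $c(\tau)>0$ iff $\tau$ admits a hyperbolic cycle: this is~(3). And $c(\tau)=0$ iff $\tau$ has no hyperbolic cycle. Since $\tau$ is parabolic iff it has no fixed point in $\H^q$ and $c(\tau)=0$, part~(1) rewrites the first condition as ``not all cycles are elliptic'' and the second as ``no hyperbolic cycle''; as each cycle is elliptic, parabolic or hyperbolic, the conjunction is precisely ``some parabolic cycle and no hyperbolic cycle'', which is~(2).

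It remains to identify the dilation in the hyperbolic case: $\lambda_\tau=e^{-c(\tau)}=e^{-\max_\nu c(\tau_\nu)}=\min_\nu e^{-c(\tau_\nu)}$. For a non-hyperbolic cycle $e^{-c(\tau_\nu)}=1$, while for a hyperbolic cycle $e^{-c(\tau_\nu)}=\lambda_{\tau_\nu}<1$, so the minimum is attained among the hyperbolic cycles and equals $\min\{\lambda_{\tau_\nu}:\tau_\nu\ \text{hyperbolic}\}$, as asserted. The only step requiring real care is the converse of~(1), i.e. reassembling a fixed point of $\tau$ out of fixed points of its cycles, which is bookkeeping with the partition $\coprod_\nu J_\nu$ and the bijections $p_\nu$; everything else is a formal consequence of Proposition~\ref{PR_multiplier-and-div-rate} and the cycle-automorphism dictionary already in place.
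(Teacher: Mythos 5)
Your proposal is correct and follows essentially the same route the paper intends: the paper presents Corollary~\ref{peraut} as an immediate consequence of Proposition~\ref{PR_multiplier-and-div-rate}, and you correctly derive (2), (3) and the dilation formula from $c(\tau)=\max_\nu c(\tau_\nu)$ together with $\lambda_\tau=e^{-c(\tau)}$ and the cycle-automorphism dictionary. The only point the paper leaves entirely implicit and you spell out is the ellipticity statement~(1), which indeed cannot be read off from the divergence-rate proposition alone (since $c=0$ does not separate elliptic from parabolic) and requires the elementary observation that a fixed point of $\tau$ projects to, and can be reassembled from, fixed points of the individual cycle factors; your bookkeeping there is right.
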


\section{The Valiron equation}\label{S_Valiron}
First we show that  hyperbolicity of $f$ is a necessary condition for the Valiron equation to have solutions with values in~$\H$.
\begin{proposition}
Let $f\colon \H^N\to\H^N$  be a holomorphic self-map. Suppose that there exists a holomorphic function $V\colon \H^N\to \H$ and a constant $\mu\in (0,1)$  such that
$$V\circ f=\frac{1}{\mu}V.$$
Then $f$ is hyperbolic with dilation $\lambda_{f}\leq \mu$.
\end{proposition}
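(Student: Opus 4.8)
The plan is to read off information about the divergence rate $c(f)$ --- which by Definition~\ref{definition} determines both the dynamical type of $f$ and the value $\lambda_f=e^{-c(f)}$ --- directly from the functional equation, using only that holomorphic maps do not increase the Kobayashi distance.

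First I would iterate the equation: from $V\circ f=\mu^{-1}V$ one obtains $V\circ f^{m}=\mu^{-m}V$ for every $m\ge0$. Fix a base point $x\in\H^N$ and put $w:=V(x)\in\H$. Since $V\colon\H^N\to\H$ is holomorphic, it does not increase the Kobayashi distance, so
\[
k_{\H^N}\big(f^{m}(x),x\big)\ \ge\ k_{\H}\big(V(f^{m}(x)),V(x)\big)\ =\ k_{\H}\big(\mu^{-m}w,\ w\big)\qquad(m\ge0).
\]
It remains to estimate $k_{\H}(\mu^{-m}w,w)$ from below.

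This is the key step. Because $\mu>0$, the imaginary part of $\mu^{-m}w$ is $\mu^{-m}\Im w$, and the Poincar\'e distance on $\H$ dominates the logarithmic variation of the imaginary part: $k_{\H}(z',z)\ge\big|\log(\Im z'/\Im z)\big|$ (in the normalization of $k_\H$ for which $c(g)=-\log\lambda_g$ holds for hyperbolic disc self-maps, as recalled in the Preliminaries; this follows from $k_{\H}(z',z)=2\tanh^{-1}\!\big(|z'-z|/|z'-\bar z|\big)$ together with the elementary inequality $|z'-z|\,(\Im z'+\Im z)\ge|\Im z'-\Im z|\,|z'-\bar z|$, or from the length estimate for the hyperbolic metric $|dz|/\Im z$). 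Hence
\[
k_{\H}\big(\mu^{-m}w,\ w\big)\ \ge\ \big|\log\mu^{-m}\big|\ =\ -m\log\mu ,
\]
so $k_{\H^N}(f^{m}(x),x)\ge -m\log\mu$ for all $m\ge0$. Dividing by $m$ and letting $m\to\infty$ yields $c(f)\ge-\log\mu>0$; thus $f$ is hyperbolic by Definition~\ref{definition}, and $\lambda_f=e^{-c(f)}\le e^{\log\mu}=\mu$, which is the assertion.

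I do not anticipate a genuine difficulty; the only point requiring some care is the bookkeeping of the normalization of the Kobayashi distance on $\H$, so that the constant multiplying $\log\mu$ in the estimate is exactly $1$ --- this is pinned down by the relation $\lambda=e^{-c}$ recalled in the Preliminaries. An equivalent, normalization-free way to run the argument is to observe that $V$ semi-conjugates $f$ to the hyperbolic automorphism $L\colon z\mapsto z/\mu$ of $\H$; then $c(f)\ge c(L)$ because $V$ does not increase distances, and $c(L)=-\log\mu$ since $z\mapsto z/\mu$ has angular derivative $\mu$ at its Denjoy--Wolff point $\infty$, so that $\lambda_L=\mu$ and the quoted identity applies.
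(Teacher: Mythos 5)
Your proof is correct and is, in substance, the same as the paper's. The paper simply observes that $V$ semi-conjugates $f$ to the automorphism $z\mapsto z/\mu$ of $\H$, whose divergence rate is $-\log\mu$, and then invokes \cite[Lemma~2.9]{ABmod} (monotonicity of the divergence rate under holomorphic semi-conjugation) to conclude $c(f)\ge-\log\mu$. Your main argument is a self-contained unpacking of exactly that lemma in this special case: iterate the equation, apply the Kobayashi distance-decreasing property of $V$, and bound $k_\H(\mu^{-m}w,w)$ from below by $-m\log\mu$ via the imaginary-part estimate (your inequality $|z'-z|(\Im z'+\Im z)\ge|\Im z'-\Im z|\,|z'-\bar z|$ reduces to $4(a'-a)^2\Im z\,\Im z'\ge0$, so it is indeed elementary). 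The alternative you sketch at the end --- $c(f)\ge c(L)$ with $L(z)=z/\mu$ --- is precisely the paper's one-line proof. So the two routes coincide; the only difference is that you prove the needed monotonicity of $c$ inline rather than citing it.
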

\begin{proof}
Since the divergence rate of the automorphism $z\mapsto \frac{1}{\mu}z $ equals $-\log \mu$, by \cite[Lemma~2.9]{ABmod} we have that
$-\log\lambda_{f}\coloneqq c(f)\geq -\log \mu>0$.
\end{proof}

\begin{definition}
Let $f:\H^N\to\H^N$ be a hyperbolic holomorphic self-map with dilation~$\lambda_{f}$.
By a \textsl{Valiron function $V$} of $f$ we mean
a holomorphic function $V:\H^N\to\H$ that solves the {\sl Valiron equation}
\begin{equation}\label{valironequation}
V\circ f=\frac{1}{\lambda_{f}}V.
\end{equation}
\end{definition}

For hyperbolic cycle automorphisms we can give a complete characterization of the family of all Valiron functions. According to Theorem \ref{TH_canonical-from-cycle-auto}, without loss of generality we may assume that $\tau$ is in its canonical form, i.e. $\tau\coloneqq\frac{1}{\lambda}\sigma_k$ for some constant~${\lambda\in(0,1)}$.
\begin{proposition}\label{PR_Valiron-functions-for-cycle-autos}
Let $\tau\coloneqq\frac{1}{\lambda}\sigma_k$, where $\lambda\in(0,1)$. Then $V$ is a Valiron function of~$\tau$ if and only if
\begin{itemize}
\item[(i)] $V(rw)=rV(w)$ for all $r>0$ and all~$w\in\H^k$;
\item[(ii)] $V\circ\sigma_k=V$.
\end{itemize}
Moreover, $V(\H^k)=\H$ for any Valiron function $V$ of $\tau$.
\end{proposition}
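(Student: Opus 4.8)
The plan is as follows. The ``if'' part is immediate: assuming $V\colon\H^k\to\H$ is holomorphic and satisfies (i) and (ii), one has $V(\tau z)=V(\tfrac1\lambda\sigma_k z)=\tfrac1\lambda V(\sigma_k z)=\tfrac1\lambda V(z)$, where the first equality uses (i) with $r=1/\lambda$ and the second uses (ii); since $\tau$ has dilation $\lambda$, this says exactly that $V$ is a Valiron function of $\tau$. So the real content is the converse together with the surjectivity statement, and I claim both reduce to one assertion: \emph{every Valiron function $V$ of $\tau=\tfrac1\lambda\sigma_k$ is positively homogeneous of degree one}, i.e. $V(rz)=rV(z)$ for all $r>0$ and all $z\in\H^k$. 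Indeed, granting this, (ii) follows from the Valiron equation because $\tfrac1\lambda V(\sigma_k z)=V(\tfrac1\lambda\sigma_k z)=V(\tau z)=\tfrac1\lambda V(z)$; and the surjectivity is obtained by restricting $V$ to the complex line $\ell=\{\zeta\mathbf i:\zeta\in\C,\ \zeta\mathbf i\in\H^k\}$, $\mathbf i=(i,\dots,i)$, which is naturally the right half-plane $\{\Re\zeta>0\}$. There $V|_\ell$ is a degree-one homogeneous holomorphic map into $\H$, hence $\C$-linear (divide by $\zeta$ and note the quotient is then homogeneous of degree zero, so constant), hence of the form $\zeta\mapsto i\beta\zeta$ with $\beta>0$ — these being the only $\C$-linear maps carrying $\{\Re\zeta>0\}$ into $\H$ — and such maps are already onto $\H$; so $\H=V(\ell)\subseteq V(\H^k)\subseteq\H$.

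To prove homogeneity I would argue as follows. First, $V$ is nonconstant (a constant $c$ would have to satisfy $c=c/\lambda$), and for every $t\in\R$ the map $W_t(z):=e^{-t}V(e^tz)$ is again a Valiron function of $\tau$, because the dilation $R_t\colon z\mapsto e^tz$ commutes with $\tau$; moreover $W_t=W_{t+ka}$ where $a:=-\log\lambda$, since $\tau^k$ is the dilation $z\mapsto\lambda^{-k}z$ and hence $V(\lambda^{-k}z)=\lambda^{-k}V(z)$. Homogeneity of $V$ is precisely the statement that $t\mapsto W_t$ is constant. (Averaging already produces one homogeneous Valiron function, $\widetilde V:=\tfrac1{ka}\int_0^{ka}W_t\,dt$; the point is to show $V=\widetilde V$.) Next, restrict to the diagonal $D=\{(\zeta,\dots,\zeta):\zeta\in\H\}$, which is $\tau$-invariant with $\tau|_D$ equal to the hyperbolic automorphism $\zeta\mapsto\zeta/\lambda$ of $\H$, of dilation $\lambda$. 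Then $V|_D$ is an $\H$-valued solution of the Valiron equation for $\tau|_D$, so by the one-variable uniqueness theorem of Bracci and Poggi-Corradini~\cite{BP} we get $V|_D(\zeta)=\alpha\zeta$ for some $\alpha>0$, and consequently $W_t|_D\equiv V|_D$ for every $t$.

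It remains to propagate homogeneity off the diagonal, and I expect this to be the main obstacle. Here I would change to coordinates adapted to $\tau$: with $\Pi(z):=z_1\cdots z_k$ one has $\Pi\circ\sigma_k=\Pi$ and $\Pi\circ\tau=\lambda^{-k}\Pi$, so for the holomorphic branch of $\Pi^{1/k}$ on the simply connected domain $\H^k$ with $\arg\Pi^{1/k}=\tfrac1k\sum_j\arg z_j\in(0,\pi)$ the function $\mathcal V:=V/\Pi^{1/k}$ is holomorphic, \emph{$\tau$-invariant}, maps $\H^k$ into the slit plane $\C\setminus(-\infty,0]$ (because $\arg\mathcal V=\arg V-\tfrac1k\sum_j\arg z_j$ with both terms in $(0,\pi)$), and satisfies: $V$ is degree-one homogeneous iff $\mathcal V$ is invariant under every dilation $R_t$. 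Thus the proposition reduces to showing that a $\tau$-invariant holomorphic map $\mathcal V\colon\H^k\to\C\setminus(-\infty,0]$ with $\mathcal V|_D\equiv\alpha$ is automatically $R_t$-invariant for all $t$. Being $\tau$-invariant, $\mathcal V$ is in particular invariant under $\tau^k$, i.e. under scaling by $\lambda^{-k}$, so along the dilation orbits it is a periodic holomorphic function of $\log\Pi$ whose logarithm has imaginary part confined to $(-\pi,\pi)$; expanding in the corresponding Fourier modes in $\log\Pi$ and using that near the faces of the distinguished boundary of $\H^k$ where $\sum_j\arg z_j\to0$ (resp.\ $\to k\pi$) the slit-plane constraint forces $\arg\mathcal V$ to stay $\ge0$ (resp.\ $\le0$) in the limit, one should be able to kill every non-constant mode and conclude that $\mathcal V$ depends only on dilation-invariant quantities — equivalently, this step is a several-variable sharpening of the argument behind~\cite{BP}. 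Once homogeneity is established, the two consequences recorded in the first paragraph finish the proof.
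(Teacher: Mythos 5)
Your reductions are sound and match the structure of the paper's argument: the ``if'' direction, the derivation of (ii) from (i), the surjectivity $V(\H^k)=\H$ from homogeneity restricted to a single complex line, and the reduction of the converse to proving that $V$ is positively homogeneous of degree one. The gap is exactly where you flag it: the step ``propagate homogeneity off the diagonal'' is not proved. Your plan — pass to $\mathcal V := V/\Pi^{1/k}$, observe it is $\tau$-invariant and slit-plane valued, and kill the non-constant Fourier modes in $\log\Pi$ using the boundary constraint — does not go through as sketched. The function $\mathcal V$ depends on all $k$ complex variables, not on $\Pi$ alone, so ``Fourier modes in $\log\Pi$'' do not account for its full variation; and on each dilation orbit the map $t\mapsto\mathcal V(e^tz_0)$ is a holomorphic periodic function only on a horizontal strip of finite width depending on $z_0$, and a periodic holomorphic function on a finite strip with slit-plane values need not be constant (for instance $e^{2\pi i t/(ka)}$ on a sufficiently narrow strip). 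Substantial new input would be needed to make this step rigorous, and it is not clear the argument can be completed along these lines.

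The paper sidesteps the propagation problem entirely by a cleaner use of the very one-variable rigidity you invoke. The observation is that $\tau^{k}$ is the scalar dilation $z\mapsto z/\lambda^{k}$, so \emph{every} complex line $\zeta\mapsto\zeta a$ with direction $a\in(\R^+)^k$ is $\tau^{k}$-invariant — not just the diagonal $a=(1,\ldots,1)$. The restriction $V_a(\zeta):=V(\zeta a)$ is therefore an $\H$-valued holomorphic function satisfying the one-variable Valiron equation $V_a(\zeta/\lambda^{k})=V_a(\zeta)/\lambda^{k}$ for the hyperbolic automorphism $\zeta\mapsto\zeta/\lambda^{k}$ of $\H$, and the one-variable rigidity (the paper cites Heins; the Bracci--Poggi-Corradini uniqueness you invoke would serve equally well) forces $V_a(\zeta)=-iV(ia)\,\zeta$. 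Taking $\zeta=ir$ gives $V(ira)=rV(ia)$ for all $r>0$ and $a\in(\R^+)^k$, i.e.\ homogeneity on the totally real slice $(i\R^+)^k$, and the Uniqueness Principle upgrades this to (i) on all of $\H^k$. In short: run your one-variable rigidity argument over the whole real family of lines through the origin, not only over the diagonal; there is then nothing left to propagate.
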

\begin{proof}
It is an easy exercise to check that if $V:\H^k\to\H$ is holomorphic and satisfies (i) and~(ii), then it is a Valiron function of~$\tau$.

Let us prove the converse. Suppose that $V$ is a Valiron function of~$\tau$.  Then
\begin{equation}\label{EQ_G-property1a}
V(w/\lambda^k)=V(w)/\lambda^k \quad\text{for all~} w\in\H^k.
\end{equation}
We have to show that it satisfies (i) and~(ii).
For $a=(a_1,\ldots,a_k)\in(\R^+)^k$ consider the map $V_a(\zeta):=V(\zeta a)$, $\zeta\in\H$.
Then $$V_a(\zeta/\lambda^k)=V_a(\zeta)/\lambda^k\quad \text{for all~} \zeta\in \H.$$
By a result of Heins \cite{heins}, it follows that $V_a(\zeta)=-iV(ia)\zeta$ for all $\zeta\in\H$.

In particular,  $$V(ira)=V_a(ir)=V(ia)r \quad  \text{for all}\  r>0~\text{and all}~a\in(\R^+)^k.$$ Applying now the Uniqueness Principle for holomorphic functions, we obtain~(i). In its turn (i) yields $$V\circ \sigma_k=V\circ (\lambda\tau)=\lambda( V\circ\tau)=V.$$

Finally, the chain of inclusions $$\H=V_a(\H)\subset V(\H^k)\subset\H$$ shows that $V(\H^k)=\H$ whenever $V$ is a Valiron function of~$\tau$.
\end{proof}

Now we consider the case of an arbitrary hyperbolic automorphism $\tau:\H^q\to\H^q$ with dilation $\lambda$. According to Remark~\ref{RM_Poincare-thrm} and Proposition~\ref{PR_multiplier-and-div-rate}, up to a reordering of variables, $\tau$ can be written in the following form
\begin{equation}\label{EQ_hyper_aut-splitting}
\H^q\cong\H^m\times\H^{q-m}\ni(z,w)\mapsto (\hat\tau(z),\tilde\tau(w))\in \H^m\times\H^{q-m},
\end{equation}
where $0<m\le q$ and the cycle decomposition  for~$\hat\tau$ contains only hyperbolic cycle automorphism with dilation $\lambda$ while the cycle decomposition for~$\tilde\tau$ may contain only non-hyperbolic cycle automorphism or hyperbolic cycle automorphisms with strictly greater dilation.

Applying Theorem~\ref{TH_canonical-from-cycle-auto} to each of the cycle automorphisms in the decomposition of~$\hat\tau$, we see that we can assume without loss of generality that $\hat\tau\coloneqq \frac{1}{\lambda}\hat\sigma,$ where
$\hat\sigma:\C^m\to\C^m$ is a linear map of the form $$\hat\sigma(z_1,z_2,\ldots,z_m):=(z_{\hat p(1)},z_{\hat p(2)},\ldots,z_{\hat p(m)})$$ defined by  a permutation $\hat p$  of~$\{1,\ldots,m\}$.
\begin{theorem}\label{TH_Valiron-functions-for-autos}
In the above notation, a holomorphic function $V:\H^m\times\H^{q-m}\to\H$ is a Valiron function of~$\tau$ if and only if $V(z,w)$  does not depend on~$w\in \H^{q-m}$ and
\begin{itemize}
\item[(i)] $V(rz)=rV(z)$ for all $r>0$ and all~$z\in\H^m$;
\item[(ii)] $V\circ\hat\sigma=V$.
\end{itemize}
Moreover, $V(\H^q)=\H$ for any Valiron function $V$ of the automorphism~$\tau$.
\end{theorem}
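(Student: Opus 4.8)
The plan is to prove the two implications of the ``if and only if'' separately. Sufficiency is immediate: if $V(z,w)=V(z)$ does not depend on $w\in\H^{q-m}$ and satisfies (i) and (ii), then $V\circ\tau(z,w)=V(\hat\tau z)=V\bigl(\tfrac1\lambda\hat\sigma z\bigr)=\tfrac1\lambda V(\hat\sigma z)=\tfrac1\lambda V(z)$, using (i) with $r=1/\lambda$ and then (ii). For the converse I would first prove \emph{Step~A}: a Valiron function $V$ of $\tau$ is independent of~$w$; and then \emph{Step~B}: once $V$ has been pushed down to $\H^m$, deduce (i), (ii), and the surjectivity $V(\H^q)=\H$.

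\emph{Step~B (granting Step~A).} Let $\bar V\colon\H^m\to\H$ be the induced map; it is a Valiron function of $\hat\tau=\tfrac1\lambda\hat\sigma$, where $\hat\sigma$ is a permutation of the coordinates, of some order~$d$. The proof of Proposition~\ref{PR_Valiron-functions-for-cycle-autos} applies almost verbatim, with $\sigma_k,\lambda^k$ replaced by $\hat\sigma,\lambda^d$: since $\hat\sigma^d=\id$, one has $\hat\tau^d=\lambda^{-d}\,\id$, hence $\bar V(z/\lambda^d)=\bar V(z)/\lambda^d$, and for $a\in(\R^+)^m$ the slice $V_a(\zeta):=\bar V(\zeta a)$ satisfies $V_a(\zeta/\lambda^d)=V_a(\zeta)/\lambda^d$; by Heins' theorem~\cite{heins}, $V_a(\zeta)=-i\,\bar V(ia)\,\zeta$. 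Since $V_a(\H)\subset V(\H^q)\subset\H$, the constant $-i\bar V(ia)$ must be a positive real, so $V_a(\H)=\H$ and $\bar V(ira)=r\,\bar V(ia)$ for all $r>0$. The identity principle on $\H^m$ — the totally real, full‑dimensional set $i(\R^+)^m$ is a uniqueness set, exactly as in Proposition~\ref{PR_Valiron-functions-for-cycle-autos} — upgrades this to (i); then (ii) follows from $\bar V\circ\hat\sigma=\bar V\circ(\lambda\hat\tau)=\lambda\,(\bar V\circ\hat\tau)=\bar V$; and $\H=V_a(\H)\subset V(\H^q)\subset\H$ gives surjectivity.

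\emph{Step~A, the crux.} The idea is to analyze the backward orbit $\tau^{-n}(z,w)=(\lambda^n\hat\sigma^{-n}z,\ \tilde\tau^{-n}w)$. Its first $m$ coordinates tend to the boundary point $0\in\partial\H$ at the exact rate $\lambda^n=e^{-nc(\tau)}$, and $V(\tau^{-n}(z,w))=\lambda^nV(z,w)\to 0$ with $\Im V(\tau^{-n}(z,w))=\lambda^n\Im V(z,w)$ also of order $e^{-nc(\tau)}$. By the splitting~\eqref{EQ_hyper_aut-splitting} together with Proposition~\ref{PR_multiplier-and-div-rate} and Corollary~\ref{peraut}, the divergence rate of $\tilde\tau$ is strictly less than $c(\tau)=-\log\lambda$, so the remaining coordinates of $\tau^{-n}(z,w)$ approach the corresponding boundary values strictly more slowly (in the elliptic case they stay in a compact set, in the parabolic case they remain on a fixed horocycle): the horocyclic coordinate of the $j$-th coordinate ($j>m$) decays like $e^{-nc_j}$ with $c_j<c(\tau)$, or not at all. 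A Julia–Lindelöf boundary analysis for $V\colon\H^q\to\H$ along the escape of the orbit to the boundary face carried by the first $m$ coordinates then forces the first‑order boundary data of $V$ there to ignore the slow coordinates — any nonzero contribution of a coordinate $j>m$ would force $\Im V$ along the orbit to decay slower than $e^{-nc(\tau)}$, contradicting $\Im V(\tau^{-n}(z,w))=\lambda^n\Im V(z,w)$ — and the self‑similarity $V=\lambda^{-n}(V\circ\tau^{-n})$ promotes this first‑order statement to the global identity $\partial V/\partial w_j\equiv0$ for all $j>m$, i.e.\ $V$ does not depend on $w$.

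The main obstacle is Step~A. Note that the canonical‑semi‑model machinery of Section~\ref{S_Valiron} (and Theorem~\ref{principaleforward}) does not help directly: since $\tau$ is an automorphism, $(\H^q,\id,\tau)$ is itself a canonical Kobayashi hyperbolic semi-model for~$\tau$, so a genuine analytic input is needed. That input is the polydisc form of Julia's lemma / the Lindelöf principle, applied at the boundary face $\{0\}^m\times\H^{q-m}$, together with careful bookkeeping of horocyclic decay rates, using in an essential way the strict gap $c(\tilde\tau)<c(\tau)$ recorded in~\eqref{EQ_hyper_aut-splitting}; this is the polydisc analogue of the disc rigidity of Bracci and Poggi-Corradini~\cite{BP}. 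Step~B and the sufficiency direction are routine once the one-variable ingredients (Heins' theorem and the identity principle) are available.
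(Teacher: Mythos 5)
Your sufficiency direction and Step~B are fine and match the paper: once independence of~$w$ is known, the paper explicitly says to repeat the argument of Proposition~\ref{PR_Valiron-functions-for-cycle-autos} with $\sigma_k$ replaced by~$\hat\sigma$, which is precisely what you do (using the order of~$\hat\sigma$ instead of~$Q$ is an immaterial change). You are also right that the semi-model machinery is of no help here because $(\H^q,\id,\tau)$ is already its own canonical semi-model, so a genuine analytic input is required.

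However, Step~A --- the heart of the theorem --- is left as a programme, not a proof, and the route you sketch is not the one that actually closes the gap. You propose a Julia--Lindel\"of analysis at the boundary face $\{0\}^m\times\H^{q-m}$ along the backward orbit, claiming that ``any nonzero contribution of a coordinate $j>m$ would force $\Im V$ along the orbit to decay slower than $e^{-nc(\tau)}$,'' and that self-similarity ``promotes this first-order statement to the global identity $\partial V/\partial w_j\equiv0$.'' Neither assertion is justified: there is no Julia--Wolff--Carath\'eodory package for the (non-smooth, non-strongly-pseudoconvex) polydisc quoted or proved here that yields the first claim, and the passage from boundary first-order data to a global vanishing of $\partial V/\partial w_j$ is exactly the kind of rigidity one would need to establish, not assume. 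As written this is a genuine gap.

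The paper closes Step~A with a much more elementary and self-contained argument, entirely at the level of the Kobayashi distance. Take $z_0\in(i\R^+)^m$ and a compact ball $B\subset\H^{q-m}$, and let $Q$ be as in Proposition~\ref{PR_multiplier-and-div-rate}. One has $\hat\tau^{jQ}(z_0)=z_0/\lambda^{jQ}$, so $k_{\H^m}(z_0,\hat\tau^{jQ}z_0)=jQ\,\SD\tau$ exactly, while $k_{\H^{q-m}}(w,\tilde\tau^{jQ}w)/j\to Q\,\SD{\tilde\tau}<Q\,\SD\tau$ uniformly for $w\in B$. Hence for $M=j_0Q$ with $j_0$ large, $k_{\H^q}\bigl((z_0,w),\tau^M(z_0,w)\bigr)=M\,\SD\tau$ for all $w\in B$. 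Now Schwarz--Pick gives $k_\H\bigl(V(z_0,w),V(z_0,w)/\lambda^M\bigr)\le M\,\SD\tau$, while the automorphism $\zeta\mapsto\zeta/\lambda^M$ of~$\H$ displaces every point by at least $M\,\SD\tau$, with equality precisely on its axis $i\R^+$. So $V(z_0,w)\in i\R^+$ for all $w\in B$, and since a holomorphic $\H$-valued map with image in a real line is constant (open mapping theorem), $V(z_0,\cdot)$ is constant on $\H^{q-m}$; the identity principle finishes. In short: your diagnosis of what must be proved and why the gap $c(\tilde\tau)<c(\tau)$ is essential is correct, but the proof itself is a crisp Schwarz--Pick equality-case argument along forward iterates, not a boundary-regularity/Lindel\"of argument, and the latter would require substantial additional work to make rigorous.
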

\begin{remark}\label{RM_inf_Valiron-functions}
It follows immediately from the above theorem that a hyperbolic automorphism of a polydisc has infinitely many Valiron functions.
\end{remark}
\begin{proof}[Proof of Theorem~\ref{TH_Valiron-functions-for-autos}]
As in Proposition~\ref{PR_Valiron-functions-for-cycle-autos}, it is easy to check that  if $V(z,w)$ does not depend on ${w\in \H^{q-m}}$ and satisfies (i) and~(ii), then it is a Valiron function of~$\tau$.

Now we show that if $V$ is a Valiron function, then $V(z,w)$ does not depend on~$w\in \H^{q-m}$.
Once this is proved, the rest of the proof of the theorem repeats almost literally that of~Proposition~\ref{PR_Valiron-functions-for-cycle-autos} except that the shift map~$\sigma_k$ is replaced by the map~$\hat\sigma$.

We assume $m<q$, otherwise there is nothing to prove.
Let $Q\in\Natural$ be defined as in the proof of Proposition~\ref{PR_multiplier-and-div-rate}. Then we have:
\begin{itemize}
\item[(a)] $\hat\tau^{Q}(z)=z/\lambda^Q$ for all~$z\in\H^m$;
\item[(b)] in particular, $ k_{\H^m}(z,\hat\tau^{ jQ}(z))=j k_\H(i,i/\lambda^Q)=jQ \SD\tau$ for all $j\in\Natural$ and any $z\in (i\R^+)^m$;
\item[(c)] $ k_{\H^{q-m}}(w,\tilde\tau^{ jQ}(w))/j\to Q\SD{\tilde\tau}<Q\SD{\tau}$ as~$j\to+\infty$  uniformly on compact subsets with respect to~$w\in \H^{q-m}$.
\end{itemize}

Fix some $z_0\in (i\R^+)^m$ and some closed ball $B\subset\H^{q-m}$.  According to (b) and~(c) there exists $j_0\in\Natural$ such that if~$M:=j_0Q$, then $$ k_{\H^q}\big((z_0,w), \tau^{ M}(z_0,w)\big)=\max\big\{ k_{\H^m}\big(z_0, \hat\tau^{ M}(z_0)\big), k_{\H^{q-m}}\big(w, \tilde\tau^{ M}(w)\big)\big\}=M \SD\tau$$ for all~$w\in B$. Therefore, on the one hand,
$$
 k_\H\big(V(z_0,w),V(\tau^{M}(z_0,w))\big)\le k_{\H^q}\big((z_0,w),\tau^{M}(z_0,w)\big)=M \SD\tau.
$$
On the other hand, $V(\tau^{M}(z_0,w))=V(z_0,w)/\lambda^M$ because $V$ is a Valiron function of~$\tau$. Thus
$$
 k_\H\big(V(z_0,w),V(\tau^{M}(z_0,w))\big)\ge  k_{\H}(i,i/\lambda^M)=M \SD\tau,
$$
and   the equality is attained only if $V(z_0,w)\in i\R^+$.

This shows that $V(z_0,w)\in i\R^+$ for all $w\in B$ and hence, by the Open Mapping Theorem,  $V(z_0,\cdot):\H^{q-m}\to\H$ is a constant function for any~$z_0\in  (i\R^+)^m$. By the Uniqueness Principle for holomorphic functions, this means that $V(z,w)$ depends only on~$z$ in the whole~$\H^m\times\H^{q-m}$.
\end{proof}

\nv{Now we prove the main result of this section.}
\begin{theorem}\label{TH_Valiron-equation}
 Let $f\colon \H^N\to \H^N$ be a hyperbolic holomorphic self-map with dilation $\lambda_{f}$.
 Then $f$ admits a  canonical Kobayashi hyperbolic semi-model $(\H^q, \ell,\tau)$, where $1\leq q\leq N$ and  $\tau$ is a hyperbolic automorphism of $\H^N$ with dilation $\lambda_{\tau}=\lambda_f$.
Moreover, a holomorphic function $V\colon \H^N\to \H$ is a Valiron function of $f$ if and only if $V=\tilde V\circ \ell$, where  $\tilde V\colon \H^q\to\H$ is a Valiron function of the automorphism $\tau$. In particular, every Valiron  function $V$ of $f$ satisfies
\begin{equation}\label{prinh}
\bigcup_{n\geq 0} \lambda_{f}^{n}\,V(\H^N)=\H.
\end{equation}
\end{theorem}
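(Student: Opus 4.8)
The plan is to transfer the problem to the automorphism $\tau$ of the canonical Kobayashi hyperbolic semi-model of $f$ and then invoke the description of Valiron functions of polydisc automorphisms from Theorem~\ref{TH_Valiron-functions-for-autos}. First I would note that $\H^N$, being biholomorphic to the polydisc, is a cocompact Kobayashi hyperbolic complex manifold (Kobayashi hyperbolic as a bounded domain, cocompact because $\Aut(\H^N)$ acts transitively), so Theorem~\ref{principaleforward} furnishes a canonical Kobayashi hyperbolic semi-model $(Z,\ell,\tau)$ for $f$ with $Z$ a holomorphic retract of $\H^N$. By the theorem of Heath and Suffridge~\cite{HS}, $Z$ is biholomorphic to $\H^q$ for some $0\le q\le N$, so we may take $Z=\H^q$ with $\tau\in\Aut(\H^q)$. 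Part~(2) of Theorem~\ref{principaleforward} gives $c(\tau)=c(f)>0$, which in particular excludes $q=0$ (the base space of a canonical semi-model reduces to a point only when $c(f)=0$) and, by Definition~\ref{definition}, says precisely that $\tau$ is a hyperbolic automorphism of $\H^q$ with $\lambda_\tau=e^{-c(\tau)}=e^{-c(f)}=\lambda_f$.

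The heart of the argument is that a Valiron function of $f$ is itself the intertwining map of a Kobayashi hyperbolic semi-model, so it must factor through the canonical one. Given a Valiron function $V$ of $f$, set $\varphi(z):=z/\lambda_f$ and $\Lambda:=\bigcup_{n\ge0}\lambda_f^{\,n}\,V(\H^N)\subseteq\H$. From $V\circ f=\varphi\circ V$ and $f(\H^N)\subseteq\H^N$ one gets $\tfrac1{\lambda_f}V(\H^N)=V(f(\H^N))\subseteq V(\H^N)$, so the sets $\lambda_f^{\,n}V(\H^N)$ form an increasing nested family of domains; hence $\Lambda$ is a domain, it is Kobayashi hyperbolic as a subdomain of $\H$, the linear map $\varphi$ sends $\Lambda$ onto $\Lambda$ and so restricts to an automorphism of $\Lambda$, and $\bigcup_n\varphi^{-n}(V(\H^N))=\Lambda$. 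Thus $(\Lambda,V,\varphi|_\Lambda)$ is a Kobayashi hyperbolic semi-model for $f$, and the defining property of the canonical one provides a morphism $\hat\eta\colon(\H^q,\ell,\tau)\to(\Lambda,V,\varphi|_\Lambda)$, i.e.\ a holomorphic map $\eta\colon\H^q\to\Lambda$ with $\eta\circ\ell=V$ and $\eta\circ\tau=\varphi\circ\eta=\tfrac1{\lambda_f}\eta$. Regarding $\eta$ as a map into $\H$ via the inclusion $\Lambda\hookrightarrow\H$, we obtain a Valiron function $\tilde V$ of $\tau$ with $V=\tilde V\circ\ell$. The converse inclusion is immediate: if $\tilde V\colon\H^q\to\H$ is a Valiron function of $\tau$, then $V:=\tilde V\circ\ell$ satisfies $V\circ f=\tilde V\circ\tau\circ\ell=\tfrac1{\lambda_f}V$, so $V$ is a Valiron function of $f$.

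It remains to establish~\eqref{prinh}. For a Valiron function $V=\tilde V\circ\ell$ of $f$, the relation $\tilde V\circ\tau=\tfrac1{\lambda_\tau}\tilde V$ iterates to $\tilde V\circ\tau^{-n}=\lambda_f^{\,n}\tilde V$, whence $\lambda_f^{\,n}\,V(\H^N)=\tilde V\big(\tau^{-n}(\ell(\H^N))\big)$; taking the union over $n\ge0$ and using the semi-model identity $\bigcup_n\tau^{-n}(\ell(\H^N))=\H^q$ gives $\bigcup_{n\ge0}\lambda_f^{\,n}V(\H^N)=\tilde V(\H^q)$, and $\tilde V(\H^q)=\H$ by Theorem~\ref{TH_Valiron-functions-for-autos} (after conjugating $\tau$ to the normal form used there via Poincar\'e's Theorem~\ref{TH_Poincare} and Theorem~\ref{TH_canonical-from-cycle-auto}). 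I expect the only point requiring real care to be the verification that $(\Lambda,V,\varphi|_\Lambda)$ is a bona fide semi-model — the connectedness of $\Lambda$ and its full $\varphi$-invariance — since everything else is diagram chasing on top of the already-established Theorems~\ref{principaleforward} and~\ref{TH_Valiron-functions-for-autos} and the Heath--Suffridge retract theorem.
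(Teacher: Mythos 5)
Your proposal is correct and follows essentially the same route as the paper: obtain the canonical Kobayashi hyperbolic semi-model from Theorem~\ref{principaleforward}, identify its base with $\H^q$ via Heath--Suffridge, use assertion~(2) to deduce $q\ge1$ and $\lambda_\tau=\lambda_f$, package any Valiron function $V$ of $f$ as the intertwining map of the semi-model $(\Omega, V, z\mapsto z/\lambda_f)$ with $\Omega=\bigcup_{n\ge0}\lambda_f^n V(\H^N)$, and invoke universality of the canonical semi-model to factor $V=\tilde V\circ\ell$, finishing with Theorem~\ref{TH_Valiron-functions-for-autos}. The only (harmless) differences are that you spell out the nestedness and $\varphi$-invariance of $\Omega$ and derive~\eqref{prinh} by pushing $\bigcup_n\tau^{-n}(\ell(\H^N))=\H^q$ through $\tilde V$, whereas the paper just asserts $\Omega$ is a semi-model and concludes from the sandwich $\H=\tilde V(\H^q)\subset\Omega\subset\H$.
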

\begin{proof}
By Theorem \ref{principaleforward}, there exists  a canonical Kobayashi hyperbolic semi-model $(Z,\hat\ell,\hat\tau)$  for $f$.
Since $Z$ is a holomorphic retract of $\H^N$, by \cite{HS} there exists a biholomorphism $\psi\colon Z\to\H^q$, where $0\leq q\leq N$.
Clearly $$(\H^q,\ell\coloneqq \psi\circ\hat\ell,\tau\coloneqq \psi\circ\hat\tau\circ \psi^{-1})$$ is also a  canonical Kobayashi hyperbolic semi-model for $f$.

By assertion~(2) of  Theorem \ref{principaleforward} we have that $c(\tau)=c(f)>0$. Hence $q\geq 1$ and $\tau$ is a hyperbolic
automorphism with $\lambda_\tau=\lambda_f$.

If $\tilde V\colon \H^q\to\H$ is a Valiron function of the automorphism $\tau$, then clearly $V\coloneqq \tilde V\circ \ell$ is a Valiron function of $f$.
Conversely, assume that  $V\colon \H^N\to \H$ is a Valiron function of $f$, and set $\Omega\coloneqq \cup_{n\geq 0}\lambda_{f}^n\,V(\H^N)$.
Then $(\Omega, V,z\mapsto\frac{1}{\lambda_{f}}z)$ is a semi-model for $f$ with Kobayashi hyperbolic base space. Since $(\H^q,\ell,\tau)$ is a canonical Kobayashi  hyperbolic semi-model for~$f$, there exists a semi-model morphism
$\hat {\eta}\colon (\H^q,\ell,\tau)\to(\Omega, V,z\mapsto\frac{1}{\lambda_{f}}z)$. The holomorphic function $\tilde V\coloneqq \eta$ is a Valiron function of  $\tau$ and it satisfies $V=\tilde V\circ \ell$.

\nv{Finally, equality~\eqref{prinh} holds because, by Theorem~\ref{TH_Valiron-functions-for-autos},  $\nv{\H=\tilde V(\H^q)\subset\Omega\subset\H.}$}
\end{proof}

\nv{
\begin{remark}
At the end of preparation of this paper, the authors got to know that Wang and Deng~\cite{WD} recently proved existence of a Valiron function under quite restrictive additional assumptions: in particular, it is supposed that there exists an orbit $(f^n(z_0))_{n\in\Natural}$ converging to a point on the boundary of the polydisc within a $K$-region. Consider, e.g., the self-map $\H\times\D\ni(z,w)\mapsto \big(e^{\alpha\pi}z,\tfrac{2+w}3\exp(i\log z)\big)\in\H\times\D$, where $\alpha>0$ is an irrational number and $\log z$ stands for the single-valued branch that takes values in~$\{\zeta\colon \Im\zeta\in(0,\pi)\}$ for $z\in\H$. It is easy to see that according to Definition~\ref{definition}, this self-map is hyperbolic and hence, by Theorem~\ref{TH_Valiron-equation}, it admits a Valiron function, although neither the map itself nor any of its iterates has an orbit convergent to a boundary point.
\end{remark}
}

\section{The Abel equation}\label{S_Abel}
First we show that  $f$ must be a self-map of non-zero step for the Abel equation to have a solution with values in~$\H$.
\begin{proposition}
Let $f\colon \H^N\to\H^N$  be a holomorphic self-map, and assume that there exists a holomorphic function $\Theta\colon \H^N\to \H$ and a constant $\alpha\in \{-1,1\}$  such that
$$\Theta\circ f=\Theta+ \alpha.$$
Then $f$ is  of non-zero step.
\end{proposition}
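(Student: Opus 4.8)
The plan is to iterate the functional equation and transport the orbit to $\H$ via $\Theta$, where the essential observation is that consecutive images differ by the \emph{real} number $\alpha$, so they all lie on a common horizontal line of $\H$, on which the relevant Kobayashi distances are bounded below by a positive constant.

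First I would iterate $\Theta\circ f=\Theta+\alpha$ to obtain $\Theta\circ f^n=\Theta+n\alpha$ for every $n\ge0$. Fix $z\in\H^N$ and set $y_0:=\Im\Theta(z)>0$. Since $\alpha\in\{-1,1\}\subset\R$, we get $\Im\Theta(f^n(z))=y_0$ for all $n$, so all the points $\Theta(f^n(z))$ lie on the horizontal line $\{\zeta\in\H:\Im\zeta=y_0\}$, and two consecutive ones differ exactly by $\alpha$.

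Next I would apply the distance-decreasing property of the Kobayashi pseudo-distance to the holomorphic map $\Theta\colon\H^N\to\H$: for each $n$,
$$k_{\H^N}\big(f^n(z),f^{n+1}(z)\big)\ \ge\ k_{\H}\big(\Theta(f^n(z)),\Theta(f^{n+1}(z))\big).$$
Because horizontal translations of $\H$ are automorphisms, hence $k_\H$-isometries, the right-hand side equals $k_\H(iy_0,\,iy_0+\alpha)=k_\H(iy_0,\,iy_0+1)=:\delta$, a \emph{positive} constant independent of $n$ (it depends only on $y_0$, and $|\alpha|=1$). Hence $k_{\H^N}(f^n(z),f^{n+1}(z))\ge\delta>0$ for all $n$, and therefore the step $s(z)=\lim_{n\to\infty}k_{\H^N}(f^n(z),f^{n+1}(z))\ge\delta>0$. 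Since $z\in\H^N$ was arbitrary, $f$ is of non-zero step.

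There is essentially no serious obstacle here; the only point to keep in mind is that the lower bound $\delta$ depends on the base point through $y_0=\Im\Theta(z)$, which is harmless since the non-zero step condition only requires positivity of the limit at each individual point. The crucial structural input is simply that $\alpha$ is real, so that the orbit of $\Theta(z)$ stays at constant height and horizontal-translation invariance of $k_\H$ applies.
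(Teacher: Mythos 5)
Your proof is correct and follows essentially the same route as the paper's: apply the distance-decreasing property of the Kobayashi metric to $\Theta$, then use that horizontal translation by the real number $\alpha$ is an isometry of $\H$ to see that $k_{\H}\big(\Theta(f^n(z)),\Theta(f^{n+1}(z))\big)$ is a positive constant independent of $n$. The extra reduction to $k_\H(iy_0,iy_0+1)$ is a minor cosmetic simplification, but the argument is the same one the paper gives.
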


\begin{proof}
For all $x\in \H^N$ and all~$m\ge0$,  we have
 \begin{multline*}
 k_{\H^N}(f^m(x),f^{m+1}(x))\ge k_{\H}\big(\Theta(f^m(x)),\Theta(f^{m+1}(x))\big)\\=k_{\H}\big(\Theta(x)+\alpha m,\Theta(x)+\alpha (m+1)\big)=k_{\H}\big(\Theta(x),\Theta(x)+\alpha\big).
\end{multline*}
Therefore,
$$s(x)=\lim_{m\to\infty} k_{\H^N}(f^m(x),f^{m+1}(x))\ge k_{\H}\big(\Theta(x),\Theta(x)+\alpha\big)>0$$
for all $x\in \H^N$, what was to be shown.
\end{proof}

\begin{theorem}\label{par}
 Let $f\colon \H^N\to \H^N$ be a parabolic holomorphic self-map of non-zero step.
 Then it admits a  canonical Kobayashi hyperbolic semi-model  $(\H^q, \ell,\tau)$, where $1\leq q\leq N$ and $\tau$ is a parabolic automorphism of $\H^N$.
 Moreover, there exists $\alpha\in \{-1,1\}$ and a  holomorphic solution $\Theta\colon \H^N\to \H$ to the Abel equation
 $$\Theta\circ f=\Theta +\alpha$$
that satisfies
 \begin{equation}\label{prin}
 \bigcup_{n\geq 0}[\Theta(\H^N)-n\alpha]=\H.
 \end{equation}
 \end{theorem}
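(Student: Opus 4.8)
The plan is to mirror the structure of the proof of Theorem~\ref{TH_Valiron-equation}, replacing the linear automorphism $z\mapsto z/\lambda_f$ by the translation-type automorphism coming from the parabolic canonical form. First I would apply Theorem~\ref{principaleforward} to obtain a canonical Kobayashi hyperbolic semi-model $(Z,\hat\ell,\hat\tau)$ for $f$; since $Z$ is a holomorphic retract of $\H^N$, by the Heath--Suffridge result~\cite{HS} there is a biholomorphism $\psi\colon Z\to\H^q$ for some $0\le q\le N$, and $(\H^q,\ell,\tau)$ with $\ell=\psi\circ\hat\ell$, $\tau=\psi\circ\hat\tau\circ\psi^{-1}$ is again a canonical Kobayashi hyperbolic semi-model. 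By assertion~(2) of Theorem~\ref{principaleforward}, $c(\tau)=c(f)=0$, so $\tau$ is parabolic (in particular $q\ge 1$, since otherwise $f$ would have zero step, contradicting the hypothesis; this needs to be argued, see below). By Corollary~\ref{peraut}, the cycle decomposition of $\tau$ contains a parabolic cycle and no hyperbolic one.

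The next step is to produce an Abel function for the automorphism $\tau$ itself. Write $\tau$, after reordering variables, as $(z,w)\mapsto(\hat\tau(z),\tilde\tau(w))$ on $\H^r\times\H^{q-r}$, where the cycle decomposition of $\hat\tau$ consists exactly of the parabolic cycles and that of $\tilde\tau$ of the elliptic ones. Applying the parabolic case of Theorem~\ref{TH_canonical-from-cycle-auto} to each parabolic cycle and conjugating, I may assume $\hat\tau=\hat\sigma\pm(1,\dots,1)$ for a permutation map $\hat\sigma$ on $\C^r$, with a single global sign $\alpha\in\{-1,1\}$ determined by $\tau$ (if different parabolic cycles produced different signs one would handle them on the corresponding coordinate blocks, but all must have the same sign in the non-zero-step situation; more directly, fix $\alpha$ to be the sign attached to one chosen parabolic cycle). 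Then the coordinate sum $\tilde\Theta(z,w):=\sum_{j=1}^r z_j$ satisfies $\tilde\Theta\circ\tau=\tilde\Theta\pm r$ and maps into $\H$; dividing, $\tilde\Theta/r$ solves $\tilde\Theta\circ\tau=\tilde\Theta+\alpha$ with values in $\H$, and its image is all of $\H$ (it is already surjective on the first coordinate). Then $\Theta:=\tilde\Theta\circ\ell$ is a holomorphic solution of the Abel equation for $f$ with values in $\H$, exactly as in Theorem~\ref{TH_Valiron-equation}. For the union property~\eqref{prin}: $(\Omega,\Theta,z\mapsto z+\alpha)$ with $\Omega:=\bigcup_{n\ge0}[\Theta(\H^N)-n\alpha]$ is a semi-model for $f$ with Kobayashi hyperbolic base (it is biholomorphic to a strip or half-plane, hence hyperbolic since it omits more than one point); by canonicity there is a morphism from $(\H^q,\ell,\tau)$ to it, whose intertwining map is an Abel function of $\tau$ landing in $\Omega$, and since that Abel function is already surjective onto $\H$ we get $\H\subset\Omega\subset\H$.

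The step I expect to be the main obstacle is showing $q\ge1$, equivalently ruling out the degenerate case $Z=\{\mathrm{pt}\}$, and more generally identifying the sign $\alpha$ intrinsically. If $Z$ were a point then $\tau$ is the identity on a point and the semi-model gives no information; but by assertion~(1) of Theorem~\ref{principaleforward}, $\lim_m (f^m)^*k_{\H^N}=(\ell)^*k_Z$, and if $Z$ is a point the right-hand side is $0$, forcing $\lim_m k_{\H^N}(f^m(x),f^{m+1}(x))=0$ for all $x$, i.e. $f$ has zero step --- contradiction. So $q\ge1$ and, since $c(\tau)=0$, $\tau$ has no hyperbolic cycle; it must have at least one parabolic cycle, because if all its cycles were elliptic then $\tau$ would be elliptic (Corollary~\ref{peraut}), hence conjugate to a product of disc automorphisms with fixed points, and then $\lim_n k_{\H^q}(\tau^n(y),\tau^{n+1}(y))=0$; pulling back via assertion~(1) of Theorem~\ref{principaleforward} again gives zero step for $f$, a contradiction. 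Thus $\hat\tau$ is a nonempty product of parabolic cycles and the construction above goes through. The uniqueness of the sign $\alpha$ is then inherited from the uniqueness of the parabolic canonical form noted in the remark following Theorem~\ref{TH_canonical-from-cycle-auto}; one should check that all parabolic cycles of $\tau$ carry the same sign, which follows because the step of $f$ equals the step of $\tau$ and hence all parabolic cycles must have the maximal divergence behaviour forced by the non-zero step, a point I would verify by a short computation with $k_{\H^q}=\max_j k_\H$ along the lines of Proposition~\ref{PR_multiplier-and-div-rate}.
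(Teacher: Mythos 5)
Your proposal reproduces the paper's strategy --- Theorem~\ref{principaleforward} plus Heath--Suffridge to land on an automorphism $\tau$ of $\H^q$, then the cycle decomposition and the normal form of Theorem~\ref{TH_canonical-from-cycle-auto} to build the Abel function --- but two of the claims you invoke to close the argument are false.

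The assertion that ``all parabolic cycles of $\tau$ must have the same sign in the non-zero-step situation'' is not true. The automorphism $\tau(z_1,z_2)=(z_1+1,\,z_2-1)$ of $\H^2$ is parabolic and of non-zero step, it is its own canonical Kobayashi hyperbolic semi-model (take $f=\tau$), and its two parabolic cycles carry opposite signs. Consequently $\sum_{j}z_j$ composed with $\tau$ need not differ from $\sum_{j}z_j$ by a constant, so your primary definition of $\tilde\Theta$ fails, and the computation you announce ``along the lines of Proposition~\ref{PR_multiplier-and-div-rate}'' cannot succeed because the statement it would verify is false. Your fallback --- ``fix $\alpha$ to be the sign attached to one chosen parabolic cycle'' --- is exactly what the paper does, and it works; but you must then build the Abel function from that single block only, namely $A=\mu\circ g\circ\bar\pi_\nu$ with $\mu$ the average over the $k_\nu$ coordinates of the chosen cycle $\tau_\nu$, not a sum over all parabolic coordinates.

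The argument ruling out an elliptic $\tau$ is also incorrect as written. You claim that an elliptic $\tau$, being conjugate to a product of disc automorphisms with fixed points, satisfies $\lim_{n}k_{\H^q}(\tau^n(y),\tau^{n+1}(y))=0$. But $\tau$ is a Kobayashi isometry, so $k_{\H^q}(\tau^n(y),\tau^{n+1}(y))=k_{\H^q}(y,\tau(y))$ for every $n$; this is constant in $n$ and strictly positive unless $y$ happens to be the fixed point. The step of an elliptic automorphism is not identically zero. The correct argument, which also gives $q\ge 1$ at the same time, is the paper's: by the defining property of a semi-model, every $z\in\H^q$ can be written as $\tau^{-n}(\ell(x))$ for some $x\in\H^N$ and $n\ge 0$, and assertion~(1) of Theorem~\ref{principaleforward} then yields $k_{\H^q}(z,\tau(z))=s(x)>0$ for all $z$. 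In particular $\tau$ has no fixed point, hence is not elliptic. (Your argument can be salvaged by taking $y$ to be the fixed point of $\tau$ and then pulling back, but that amounts to the same reasoning.)
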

\begin{proof}
By Theorem \ref{principaleforward} there exists  a canonical Kobayashi hyperbolic semi-model $(Z,\hat\ell,\hat\tau)$  for $f$.
Since $Z$ is a holomorphic retract of $\H^N$, by \cite{HS} there exists a biholomorphism $\psi\colon Z\to\H^q$, where $0\leq q\leq N$.
Clearly $$(\H^q,\ell\coloneqq \psi\circ\hat\ell,\tau\coloneqq \psi\circ\hat\tau\circ \psi^{-1})$$ is also a  canonical Kobayashi hyperbolic semi-model for $f$.

Fix any $z\in \H^q$. By the definition of a semi-model, there exists $x\in \H^N$ and $n\geq 0$ such that $\tau^{-n}(\ell(x))=z$.
Then, by assertion~(1) of Theorem~\ref{principaleforward}, $$k_{\H^q}(z,\tau(z))=s(x)>0.$$
Hence $q\geq 1$ and $\tau$ is not elliptic. By (2) of Theorem \ref{principaleforward}, we  have  $c(\tau)=c(f)=0$. Hence $\tau$ is parabolic. By Corollary \ref{peraut}, the cycle decomposition of~$\tau$ contains at least one parabolic cycle automorphism~$\tau_\nu$. By Theorem~\ref{TH_canonical-from-cycle-auto}\,(i), there exists $g\in\Aut(\H^{k_\nu})$ such that
\begin{equation}\label{EQ_tau-nu}
g^{-1}\circ\tau_\nu\circ g=\sigma_{k_\nu}+\alpha(1,1,\ldots,1)
\end{equation}
for $\alpha\equiv1$ or $\alpha\equiv-1$.

With notations of Remark~\ref{RM_Poincare-thrm},
set $\mu(z_1,\ldots,z_{k_\nu})\coloneqq\frac{1}{k_\nu}\sum_{j=1}^{k_\nu}z_j$ and $A\coloneqq \mu\circ g\circ\bar\pi_\nu$.
Using \eqref{EQ_cycle-decomp} and~\eqref{EQ_tau-nu}, it easy to see that  $A\circ\tau=A+\alpha$. Therefore,
$\Theta\circ f=\Theta+\alpha$,
 where $\Theta:=A\circ\ell$. This proves the existence of solutions to the Abel equation.

Finally, note that $A(\H^q)=\mu(\H^{k_\nu})=\H$.  By definition of a semi-model, we have $\cup_{n\ge0}\tau^{-n}(\ell(\H^N))=\H^q$. Therefore, $$\H=A(\H^q)=\bigcup_{n\ge0}A(\tau^{-n}(\ell(\H^N)))=\bigcup_{n\ge0}[A(\ell(\H^N))-\alpha n].$$ This implies~\eqref{prin}. The proof is complete.
\end{proof}
 \begin{remark}
 There are parabolic univalent self-maps of the polydisc whose canonical Kobayashi hyperbolic semi-models are elliptic\footnote{A semi-model  $(Z,\ell,\psi)$ is said to be elliptic, if the automorphism $\psi$ has a fixed point in~$Z$.}.  An elementary example is the self-map of $\Delta \times \Ha$ defined by $f(z,w)=(\lambda z, w+i)$ with $|\lambda|=1$. Its canonical Kobayashi hyperbolic semi-model is  $(\Delta,(z,w)\mapsto z,z\mapsto \lambda z)$. Clearly, by Theorem \ref{par} such self-maps are not of non-zero step. Whether or not such a phenomenon can appear in the unit ball is an open question \cite[Section 5.5]{ABmod}.
 \end{remark}


\begin{thebibliography}{99}
\bibitem{abate} M. Abate, {\sl Horospheres and iterates of holomorphic maps} Math. Z. {\bf 198} (1988), 225--238.

\bibitem{A} M. Abate, {\sl Iteration theory of holomorphic maps on taut manifolds}, Mediterranean Press, Rende (1989).

\bibitem{AR} M. Abate, J. Raissy, {\sl Wolff-Denjoy theorems in nonsmooth convex domains}, Ann. Mat. Pura. Appl. {\bf 193} (2014), no. 5, 1503--1518.

\bibitem{Ar2} L. Arosio, {\sl Abstract basins of attractions},  Complex Analysis and Geometry, Springer Proceedings
in Mathematics $\&$ Statistics {\bf 144}  (2015), 57--66.


\bibitem{Amod}
 L. Arosio, {\sl Canonical models for the forward and backward iteration of holomorphic maps} (preprint) (arXiv:1504.02259)

\bibitem{ABmod}
 L. Arosio, F. Bracci,  {\sl Canonical models for holomorphic iteration}, Trans. Amer. Math. Soc. {\bf 368} (2016), no. 5, 3305--3339.


 \bibitem{BaPo} I. N. Baker, C. Pommerenke, {\sl On the iteration of analytic functions in a half-plane II}, J. London Math. Soc. (2) {\bf 20} (1979), no. 2,  255--258.


\bibitem{BP} F. Bracci, P. Poggi-Corradini, {\sl On Valiron's Theorem},
Future Trends in Geometric Function Theory. RNC Workshop
Jyv\"askyl\"a, Rep. Univ. Jyv\"askyl\"a Dept. Math. Stat. \textbf{92}
(2003), 39--55.

\bibitem{BBR} J. B. Baillon, R. E. Bruck\ and\ S. Reich, {\sl On the asymptotic behavior of nonexpansive mappings and semigroups in Banach spaces}, Houston J. Math. {\bf 4} (1978), no.~1, 1--9.

\bibitem{Bu} M. Budzy\'nska, {\sl The Denjoy-Wolff theorem in $\mathbb{C}^n$}, Nonlinear Anal. {\bf 75} (2012), no. 1, 22--29.

\bibitem{R1} M. Budzy\'nska, T. Kuczumow, S. Reich, {\sl Theorems of Denjoy-Wolff type}, Ann. Mat. Pura Appl. (4) {\bf 192} (2013), no. 4, 621--648.

\bibitem{zoo} M. D. Contreras, S. D\'\i az-Madrigal\ and\ C. Pommerenke, {\sl Iteration in the unit disk: the parabolic zoo}, in {\it Complex and harmonic analysis}, 63--91, DEStech Publ., Inc., Lancaster, PA, 2007.

\bibitem{cowen}
C. C. Cowen, {\sl Iteration and the solution of functional equations for functions analytic in the unit disk}, Trans. Amer. Math. Soc. \textbf{265} (1981), no. 1, 69--95.

\bibitem{frosini}
C. Frosini, {\sl Dynamics on bounded domains}, The p-harmonic equation and recent advances in analysis, Contemp. Math. {\bf 370}, Amer. Math. Soc. (2005), 99--117.


 \bibitem{HS} L. F. Heath, T. J. Suffridge, {\sl Holomorphic retracts in complex $n$-space}, Illinois J. Math. \textbf{25} (1981), no.1, 125--135.

 \bibitem{heins} M. H. Heins, {\sl A generalization of the Aumann-Carath\'eodory ``Starrheitsatz''}, Duke Math. J. \textbf{8} (1941), 312--316.

 \bibitem{herve} M. Herv\'e, {\sl Quelques propri\'et\'es des applications analytiques d'une boule \`a $m$ dimensions dans elle-m\^eme}, J. Math. Pures Appl. {\bf 42} (1963), 117--147.

\bibitem{herveP}  M. Herv\'e, {\sl It\'eration des transformations analytiques dans le bicercle-unit\'e}, Ann. Sci. \'Ec. Norm. Sup. {\bf 71} (1954), 1--28.

\bibitem{Ko} G. K\"onigs, {\sl Recherches sur les int\'egrales de certaines \'equations fonctionnelles}.
Ann. Sci. \'Ecole Norm. Sup. (3) {\bf 1} (1884), 3--–41.


\bibitem{Po} C. Pommerenke, {\sl On the iteration of analytic functions in a half plane}, J. London Mat. Soc. (2) {\bf 19} (1979), no. 3, 439--447.


\bibitem{R2}
S. Reich, {\sl  Averaged mappings in the Hilbert ball} J. Math. Anal. Appl. {\bf 109} (1985), no. 1, 199--206.

\bibitem{R3} S. Reich, I. Shafrir,  {\sl The asymptotic behavior of firmly nonexpansive mappings},  Proc. Amer. Math. Soc. {\bf 101} (1987), no. 2, 246--250.

\bibitem{Va} G. Valiron, {\sl Sur l'it\'eration des fonctions holomorphes dans un demi-plan},
Bull. Sci. Math. {\bf 47} (1931), 105--128.

\bibitem{WD} G. Wang\ and\ F. Deng, {\sl On the Valiron's theorem in the polydisk}, Acta Math. Sci. Ser. B Engl. Ed. {\bf 35} (2015), no.~1, 71--78.
\end{thebibliography}
\end{document}